\documentclass[10pt]{article}

\usepackage{amsmath}
\usepackage{amsthm}
\usepackage{amsfonts}
\usepackage{amssymb}
\usepackage{mathtools}
\usepackage{mathrsfs}
\usepackage{stmaryrd}
\usepackage{enumitem}
\usepackage{tikz-cd}
\usepackage[english]{babel}
\usepackage{cite}
\usepackage{hyperref}

\setcounter{secnumdepth}{2}
\setcounter{tocdepth}{2}

\newcommand{\for}{\hspace{0.5pt} :\hspace{0.5pt}} 
\renewcommand{\subset}{\subseteq}
\renewcommand{\supset}{\supseteq}
\DeclareMathOperator{\ima}{im}

\newcommand{\clos}[1]{\overline{#1}}
\DeclareMathOperator{\intraaux}{int}
\newcommand{\intra}[1]{\intraaux\left(#1\right)}

\newcommand{\real}{\mathbb{R}}
\newcommand{\field}{\mathbb{K}}
\newcommand{\rational}{\mathbb{Q}}
\newcommand{\nat}{\mathbb{N}}

\renewcommand{\leq}{\leqslant}
\renewcommand{\geq}{\geqslant}
\newcommand{\abs}[1]{\left\lvert #1 \right\rvert}

\newcommand{\grad}{\nabla}
\newcommand{\scalar}{\hspace{1pt}\vert\hspace{1pt}}

\newcommand{\tangent}[2]{T_{#1}(#2)}

\DeclareMathOperator{\supp}{supp}
\newcommand{\formser}[2]{#1 \llbracket #2 \rrbracket}
\newcommand{\genser}[2]{#1 \llbracket #2^* \rrbracket}
\newcommand{\mixser}[3]{#1\llbracket #2^*, #3 \rrbracket}

\newcommand{\graph}[1]{\Gamma(#1)}
\newcommand{\germalg}[2]{\mathcal{A}_{#1, #2}}
\newcommand{\gengermalg}[1]{\mathcal{A}_{#1}}
\newcommand{\functionalg}[3]{\mathcal{A}_{#1, #2, #3}}
\newcommand{\genfunctionalg}[2]{\mathcal{A}_{#1, #2}}
\newcommand{\genpolydisk}[2]{I_{#1, #2}}
\newcommand{\polydisk}[3]{I_{#1, #2, #3}}
\DeclareMathOperator{\taylor}{\mathbf{T}}

\renewcommand{\phi}{\varphi}
\renewcommand{\epsilon}{\varepsilon}
\DeclareMathOperator{\trdeg}{trdeg}

\newcommand{\almostsmooth}{\mathcal{V}}
\newcommand{\norm}[2]{\left \rVert #1 \right \rVert_{#2}}

\theoremstyle{definition}
\newtheorem{defn}{Definition}[section]
\newtheorem{remark}[defn]{Remark}
\newtheorem{digr}[defn]{}
\theoremstyle{plain}
\newtheorem{thm}[defn]{Theorem}
\newtheorem{prp}[defn]{Proposition}
\newtheorem{cor}[defn]{Corollary}
\newtheorem{lem}[defn]{Lemma}
\newtheorem{assumption}[defn]{Assumption}

\title{Generalized quasianalytic algebras with weakly smooth germs generate
  o-minimal structures}
\author{Rémi Guénet}

\begin{document}
\maketitle
\tableofcontents

\section*{Introduction}

In \cite{rolin-servi-monomialization}, Rolin and Servi give a proof of
o-minimality and model-completeness under general assumptions
satisfied by most of the currently known polynomially bounded o-minimal
expansions of the real field. However, all structures that fit the setting of
\cite{rolin-servi-monomialization} must have smooth
cell-decomposition. The goal of this paper is to weaken the assumptions of
this result so that it may apply to structures that do not have smooth
cell-decomposition. As a result, we obtain an axiomatic way to
construct new o-minimal structures which is sufficiently general to
apply to all of the currently known polynomially bounded o-minimal
expansions of the real field. We also use our result to construct a
new o-minimal structure within which we can define a nowhere smooth
function. \\

Throughout this paper, we will be using various model-theoretic
notions which we explain here in a somewhat more geometric way so as
to make our results more widely accessible. Let \(\mathcal{F}\) be a
collection of functions \(f \colon \real^m \to \real\) (here \(m \in
\nat\) is allowed to depend on \(f\)). We will say that
\(\real_\mathcal{F} = \langle \real; <, +, -, \times, 0, 1,
\mathcal{F} \rangle\) is the \textbf{expansion of the real field by the
  functions in \(\mathcal{F}\).} The collection of
\(\real_\mathcal{F}\)\textbf{-definable} sets is the smallest collection
\(\mathcal{D} = (\mathcal{D}_n)_{n \geq 1}\) such that
\begin{itemize}
\item \(\mathcal{D}_n\) is a collection of subsets of \(\real^n\)
  containing all semi-algebraic subsets of \(\real^n\) and the graphs
  of all functions \(f \colon \real^{n-1} \to \real\) in
  \(\mathcal{F}\). 
\item \(\mathcal{D}\) is closed under binary unions, binary
  intersections, taking set complements and projections. 
\end{itemize}
We say that a set \(A \subset \real^n\) is
\(\real_\mathcal{F}\)-definable if \(A \in \mathcal{D}_n\). We say
that a function \(f \colon \real^n \to \real\) is
\(\real_\mathcal{F}\)-definable if its graph is
\(\real_\mathcal{F}\)-definable. If the structure
\(\real_\mathcal{F}\) is understood, we will simply say ``definable''
instead of ``\(\real_\mathcal{F}\)-definable''. We will say that the
structure \(\real_\mathcal{F}\)
\begin{itemize}
\item is \textbf{model-complete} if one does not need to take complements
  in order to generate the whole family of definable sets.
\item is \textbf{o-minimal} if every definable set has only finitely many
  connected components. 
\item has \textbf{quantifier-elimination} if one does not need to take
  projections in order to generate the whole family of definable
  sets. 
\end{itemize}
Notice that, whenever \(\real_\mathcal{F}\) has
quantifier-elimination, it is necessarily model-complete. \\

Let us now explain briefly the setting of
\cite{rolin-servi-monomialization}. The authors consider a collection
of functions \(\mathcal{A}\) and they assume that, to each function
\(f \in \mathcal{A}\), we can associate a series with non-negative
real exponents, written \(\taylor(f)\), which should be thought of as
an asymptotic expansion of \(f\) at \(0\). The key hypothesis about
these expansions is known as \textbf{generalized quasianalyticity} and
it states that any two functions in \(\mathcal{A}\) with the same
asymptotic expansion must coincide in a neighborhood of \(0\). Since
the functions in \(\mathcal{A}\) are not all smooth, we need to resort
to using series with non-negative real exponents. However, in the
smooth case, the asymptotic developments will simply be given by the
Taylor series. In this case, we simply say ``quasianalyticity''
instead of ``generalized quasianalyticity''.

Under this hypothesis, along with various properties of the class
\(\mathcal{A}\) of functions (such as being stable under composition,
partial derivatives and the Implicit Functions Theorem), the authors
then prove that the structure \(\langle \real; <, 0, 1, +, -, \times,
\mathcal{A} \rangle\), which is called \(\real_\mathcal{A}\), is both
o-minimal and model-complete. We are now going to outline the method
used. 

Series with non-negative real exponents are called 
\textbf{generalized power series}. In \cite{rolin-servi-monomialization},
the authors prove a monomialization (or desingularization) result for
generalized power series in order to obtain corresponding results for the 
functions in \(\mathcal{A}\). This desingularization result allows the
authors to obtain parametrization theorems for sets defined by
equations and inequations involving functions in \(\mathcal{A}\). When
combining this with a Fiber Cutting Lemma (see
\cite{rolin-servi-monomialization}, 3.11) and Gabrielov's Theorem of the
Complement (see Theorem 2.7 in
\cite{vdDries-Speissegger-genealized-power-series}), the authors
are able to prove that \(\real_\mathcal{A}\) is both o-minimal and
model-complete. This method had been used
before to prove o-minimality in many special cases such as in
\cite{vddries-generalization-tarski-seidenberg,vddries-speissegger-gevrey-functions,vdDries-Speissegger-genealized-power-series,rolin-speissegger-wilkie-denjoy-carleman-classes,kaiser-rolin-speissegger-non-resonant}. \\

However, there is one notable example of an o-minimal and polynomially bounded
structrure which does not satisfy the assumptions of
\cite{rolin-servi-monomialization}, namely the structure \(\real_H\)
introduced in \cite{legal-rolin-not-c-infty} by Le Gal and
Rolin. Indeed, this
structure was the first example of an o-minimal structure that does
not have smooth cell-decomposition, while it turns out that the
assumptions of \cite{rolin-servi-monomialization} imply that the
structures considered must always have smooth cell-decomposition.
Despite that, we have already proven in \cite{guenet-weakly-smooth}
that the theorems of \cite{rolin-servi-monomialization} apply also to
the structure \(\real_H\). The goal of this paper is to prove
o-minimality and model-completeness in an axiomatic setting that
generalizes both \cite{rolin-servi-monomialization} and
\cite{guenet-weakly-smooth}. 

In order to do so, we reuse the setting of
\cite{rolin-servi-monomialization}, except that we replace
\(\mathcal{A}\)-analyticity (see \cite[Definition
1.10]{rolin-servi-monomialization}) with a weaker assumption given in
\ref{graph-simple}. As we saw already in \cite{guenet-weakly-smooth},
most of the local results in \cite{rolin-servi-monomialization} can
still be obtained without any significant change. However, it is much
more difficult to derive global results.
The main difference between this paper and \cite{guenet-weakly-smooth}
is that, in this paper, asymptotic developments may be given by
generalized power series whereas formal power series were sufficient in the
case of \(\real_H\). 

In Section
\ref{sec:monomialization}, we recall the
main steps of the monomialization algorithm from
\cite{rolin-servi-monomialization}. For later purposes, we need a
slight strengthening of their Theorem 2.11. We obtained such a
strengthening in \cite[Theorem 3.11]{guenet-weakly-smooth} in the case
of formal power series. Furthermore, we have no trouble adapting this
theorem to the case of generalized power series by appealing to various
results from \cite{rolin-servi-monomialization}.

Section \ref{sec:geometric} is dedicated to the geometric arguments
needed to prove Theorem \ref{thm:o-minimal}, according to which
\(\real_\mathcal{A}\) is o-minimal
and model-complete. In Paragraph \ref{par:basic-defs}, we introduce
the classical definitions needed in such geometric arguments. Then, in
Paragraph \ref{par:simple-sub-sets}, we define the new notion of
\textbf{simple sub-\(\Lambda\)-sets}. This notion is crucial in
stating Assumption \ref{graph-simple}, which replaces
\(\mathcal{A}\)-analyticity (see \cite[Definition
1.10]{rolin-servi-monomialization}). 

Finally, in Paragraph \ref{par:parametrizations}, we prove
parametrization theorems while Paragraph \ref{par:fiber-cutting}
focuses on obtaining a Fiber Cutting Lemma. These results combine to
give Theorem
\ref{sub-sets-are-simple}, according to which the assumptions of
Gabrielov's Theorem of the Complement (see \cite[Theorem
2.7]{vdDries-Speissegger-genealized-power-series}) are satisfied. It
turns out that Gabrielov's Theorem of the Complement implies that the
\textbf{sub-\(\Lambda\)-sets}, introduced in Paragraph
\ref{par:basic-defs}, are exactly the bounded definable sets. In view
of their definition, it is easy to deduce
model-completeness. Furthermore, o-minimality follows at once
from Corollary \ref{global-parametrization} according to which
sub-\(\Lambda\)-sets have finitely many connected components. 

In Section 4 of \cite{rolin-servi-monomialization}, the authors also
obtain a quantifier-elimination result for the expansion \(\langle
\real_\mathcal{A};
\frac{1}{x}, (\sqrt[n]{x})_{n \in \nat}\rangle\) where the new symbols are
interpreted in the obvious way wherever they are defined and extended
by \(0\) outside their domain of definition. As one can check, this
result makes use of the o-minimality of \(\real_\mathcal{A}\), but it
does not depend on \(\mathcal{A}\)-analyticity so that it extends to
the setting of this paper. 

Finally, in Section \ref{sec:example}, we construct a new o-minimal
structure using Theorem \ref{thm:o-minimal}. The striking feature of
this structure is that we can define a nowhere smooth function in it.
The idea of this construction was suggested by Olivier Le Gal and
relies heavily on ideas he introduced in \cite{legal-generic}. 

\section{Monomialization}
\label{sec:monomialization}

\subsection{Generalized power series}

This paragraph is a short reminder on the properties of generalized
power series. More details can be found in \cite[section
4]{vdDries-Speissegger-genealized-power-series}. Firstly, we say
that \(S \subset [0, \infty)^m\) is a \textbf{good set} when there are well
ordered subsets \(S_1, \dots, S_m \subset [0, \infty)\) such that \(S
\subset S_1 \times \dots \times S_m\). If \(\alpha, \beta \in [0,
\infty)^m\), we write \(\alpha \leq \beta\) to mean that \(\alpha_i
\leq \beta_i\) for every \(1 \leq i \leq m\). Given a good set \(S \subset [0,
\infty)^m\), we define \(S_\text{min}\) to be the set of
minimal elements of \(S\). It is proven in
\cite[Lemma 4.2.1]{vdDries-Speissegger-genealized-power-series} that 
\(S_{\text{min}}\) is finite and that, for every \(\alpha \in S\),
there exists \(\beta  \in S_\text{min}\) such that \(\beta \leq
\alpha\).

Consider \(X = (X_1, \dots, X_m)\) a tuple of variables. Then, a
\textbf{generalized power series} is a formal series \(F = \sum_\alpha c_\alpha
X^\alpha\) where \(\alpha\) ranges over \([0, \infty)^m\) and such
that the support \(\supp F = \{\alpha \for c_\alpha \neq 0\}\) is a
good set. The requirement on the support guarantees that these series
can be added and multiplied in the usual way. Thus, the set of all
such series is a ring that we write \(\genser \real X\).

If \(\mathcal{F} \subset \genser \real X\), then we let
\[\supp \mathcal{F} = \bigcup_{F \in \mathcal{F}} \supp F\]
and we say that \(\mathcal{F}\) has \textbf{good total support} when \(\supp
\mathcal{F}\) is a good set.

Finally, if \(Y = (Y_1, \dots, Y_n)\) is a second tuple of variables,
we write \(\mixser \real X Y\) for the set of all series \(F \in
\genser \real {(X, Y)}\) such that \(\supp F \subset [0, \infty)^m
\times \nat^n\). This is a subring of \(\genser \real {(X, Y)}\), we say
that its elements are \textbf{mixed series}. 

\subsection{Quasianalytic algebras}

This paragraph is largely based on
\cite[section 1.2]{rolin-servi-monomialization}. There are a few
differences in the notation which will be pointed out when they are
introduced.

Firstly, let \(m, n \geq 0\) be integers. Then, a \textbf{polyradius} is a tuple
of the form \(r = (s_1, \dots,
s_m, t_1, \dots, t_n) \in (0, \infty)^{m+n}\). The set
\[\polydisk m n r = [0, s_1) \times \dots \times [0, s_m) \times
  (-t_1, t_1) \times \dots \times (-t_n, t_n)\]
is called a \textbf{polydisk}. In \cite{rolin-servi-monomialization},
the same polydisk is written \(\hat{I}_{m, n,
  r}\), while \(\polydisk m n r\) refers to its interior. When \(n
= 0\), we will write
\(\genpolydisk m r\) instead of \(\polydisk m n r\). 

Given \(m, n \geq 0\) two integers and \(r\) a polyradius, we fix an
algebra \(\functionalg m n r\) of continuous functions \(f \colon
\polydisk m n r \to \real\). Throughout the document, we will assume
that \(\functionalg m n r\) satisfies points (1)-(8) of
\cite[1.8]{rolin-servi-monomialization}. However, we will not be
assuming
that the functions \(f \in \functionalg m n r\) are \(C^1\) on
\(\intra{\polydisk m n r}\). When \(n = 0\), we might write
\(\genfunctionalg m r\) in place of \(\functionalg m n r\). 

As in \cite{rolin-servi-monomialization}, we write
\(\germalg m n\) for the algebra of germs at \(0\) of the functions in
\(\functionalg m n r\). When \(n=0\), we write more
simply \(\gengermalg m\) instead of \(\germalg m n\). As in
\cite[Definition 1.11]{rolin-servi-monomialization}, we assume that
there are injective algebra morphisms \(\taylor_{m, n} \colon \germalg
m n \to \mixser \real X Y\) and that \(\taylor_{m', n'}\) extends
\(\taylor_{m, n}\) whenever \(m'+n' \geq m+ n\) and \(m' \geq m\). 
Since this last assumption removes any possible
ambiguity, we will write simply \(\taylor\) instead of \(\taylor_{m,
  n}\). Also, if \(f \in \germalg m n\), we let \(\widehat f =
\taylor(f)\). If \(\alpha \in [0, \infty)\), we
say that \(\alpha\) is an admissible exponent when there are \(m, n
\in \nat\), \(f \in \germalg m n\) and \(\beta \in \supp \widehat f\)
such that \(\alpha\) is a component of \(\beta\). We let
\(\mathbb{A}\) be the semi-ring generated by all admissible exponents
and we let \(\field\) be the field generated by \(\mathbb{A}\). 

We are not going to recall the definition of blow-up charts here, it
can be found in \cite[Definition
1.13]{rolin-servi-monomialization}. Henceforth, we assume that
conditions (1)-(7) in \cite[1.15]{rolin-servi-monomialization}
are satisfied. By \cite[Remark 1.17]{rolin-servi-monomialization},
this implies in particular that, if \(f \in 
\functionalg m n r\), then the germs at \(0\) of \(\frac{\partial
  f}{\partial y_j}\) and \(x_i \frac{\partial f}{\partial x_i}\) are
in \(\germalg m n\) for \(1 \leq i \leq m\) and \(1 \leq j \leq n\). 
As a consequence, if \(f \in \functionalg m n r\) and \(k \geq 0\) is
an integer, there exists a polyradius \(r' \leq r\) such that \(f
\restriction \intra {\polydisk m n {r'}}\) is \(C^k\). However, contrary
to \cite{rolin-servi-monomialization}, it is not necessarily the case
that \(f \restriction \intra {\polydisk m n r}\) is smooth because we
are not assuming \(\mathcal{A}\)-analyticity (see \cite[Definition
1.10]{rolin-servi-monomialization}). 

\subsection{Monomialization}

This paragraph and the next are a brief reminder on the monomialization
results from \cite[section 2]{rolin-servi-monomialization}. We will
introduce slight modifications to the results but they do not change
the proofs in any significant way. First of all, we modify the
definition of elementary transformations to include reflections.

\begin{defn}
  An \textbf{elementary transformation} is a map \(\nu \colon
  \polydisk{m'}{n'}{r'} \to \polydisk m n r\) that is of one of the
  following types.
  \begin{itemize}
  \item It is of one of the types described in \cite[Definition
    2.1]{rolin-servi-monomialization};
  \item We have \(m' = m+1, n' = n-1\) and \(\nu = \sigma_{m+i}^\pm\)
    for some \(1 \leq i \leq n\) where
    \[\sigma_{m+i}^\pm(x', y') =
      \begin{cases}
        x_k = x_k' &1 \leq k \leq m \\
        y_k = y_k' &1 \leq k < i\\ 
        y_i = \pm x_{m+1}' \\
        y_k = y_{k-1}' &i < k \leq n
      \end{cases}\]
  \end{itemize}
  The maps of the second type are called \textbf{reflections}. A finite
  composition of elementary transformations is called an \textbf{admissible
  transformation}. 
\end{defn}

Recall from \cite[Lemma 2.5]{rolin-servi-monomialization} that each
elementary transformation \(\nu\) induces an injective algebra
homomorphism \(\mixser \real X Y \to \mixser \real {X'} {Y'}, F \mapsto F
\circ \nu\). Since we have changed the notion of elementary
transformation, we also need to change the notion of elementary tree.

\begin{defn}
  An \textbf{elementary tree} is either a tree that has one of the forms described in
  \cite[Definition 2.6]{rolin-servi-monomialization} or it is a tree
  of the form
  \begin{center}
    \begin{tikzcd}
      &\bullet \arrow[']{ddl}{\sigma^+_{m+i}}
      \arrow{ddr}{\sigma^-_{m+i}}\\
      \\
      \bullet & &\bullet
    \end{tikzcd}    
  \end{center}
  for some \(1 \leq i \leq n\). \textbf{Admissible trees} of height at
  most \(h\) are defined inductively on the ordinal \(h\). The only
  admissible tree of height \(0\) is the tree which is reduced to its
  root. An admissible tree of height at most \(h\) is an elementary
  tree with admissible trees of height \(<h\) attached to each of its
  leaves. 
\end{defn}

\begin{remark}
  Notice that we have allowed the height of an admissible tree \(T\)
  to be an ordinal. However, each branch of \(T\) must be finite
  whence it induces an admissible transformation \(\rho\).
\end{remark}

\begin{defn}
  Consider \(f(x, y) \in \germalg m n\), we say that \(f\) is \textbf{normal}
  if there exists \(\alpha \in \mathbb{A}^m\)
  and \(\beta \in \nat^n\) as well as
  \(u \in \germalg m n\) such that \(u(0)\neq 0\) and \(f(x, y) =
  x^\alpha y^\beta u(x, y)\). We also say that a series \(F \in
  \mixser \real X Y \cap \ima \taylor\) such that \(F \neq 0\) is
  \textbf{normal} when there are 
  \(\alpha \in \mathbb{A}^m\) and \(\beta \in \nat^n\) as well as a
  unit \(U \in \mixser \real X Y\) such that \(F = X^\alpha Y^\beta
  U\). 
\end{defn}

\begin{remark}
  If \(f \in \germalg m n\) then \(f\) is normal if and only if
  \(\widehat f\) is normal. 
\end{remark}

According to \cite[Theorem 2.11]{rolin-servi-monomialization}, we
have the following result.

\begin{thm}
  If \(F_1, \dots, F_q \in \mixser \real X Y \cap \ima \taylor\), then
  there is an admissible
  tree \(T\) such that, if \(\rho\) is an admissible transformation
  induced by one of the branches of \(T\), then \(F_1 \circ \rho,
  \dots, F_q \circ \rho\) are all normal. \qed
\end{thm}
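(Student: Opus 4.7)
The plan is to reduce the statement to Theorem~2.11 of \cite{rolin-servi-monomialization} and exploit the fact that our modifications to the notions of elementary transformation and admissible tree are purely additive: the new definitions only enlarge these notions, without altering anything that appears in the conclusion of the theorem.

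First, I would check that ``normal'' means the same thing here as in \cite{rolin-servi-monomialization}: the exponent semi-ring $\mathbb{A}$ and the Taylor map $\taylor$ are defined in the same way, so an element $F \in \mixser \real X Y \cap \ima \taylor$ is normal in our sense exactly when it is normal in theirs. Next, every elementary transformation of \cite[Definition~2.1]{rolin-servi-monomialization} is still an elementary transformation for us; hence by a trivial induction on height every admissible tree in their sense is admissible in ours, and the admissible transformation induced by a given branch is literally the same map in both formalisms.

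Given these two compatibility observations, I would simply apply \cite[Theorem~2.11]{rolin-servi-monomialization} to $F_1, \ldots, F_q$, producing a Rolin--Servi admissible tree $T$ whose branches induce admissible transformations $\rho$ with each $F_i \circ \rho$ normal. Such a $T$ is automatically admissible in our broader sense, and so it witnesses the claim. The reflections we have added to Definition~1 play no role here; their utility only emerges later, in the geometric arguments of Section~\ref{sec:geometric}. The main non-trivial point to verify is the compatibility of the generalized-series framework assumed in this paper with the one used in \cite{rolin-servi-monomialization}, so that their inductive normalization argument applies verbatim to elements of $\ima \taylor$; this is exactly what the author alludes to when saying that the generalized-series case is obtained ``by appealing to various results from'' \cite{rolin-servi-monomialization}, and I do not foresee any genuine obstacle beyond this routine verification.
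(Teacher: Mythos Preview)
Your proposal is correct and matches the paper's own treatment: the paper states this theorem with a bare \qed\ immediately after ``According to \cite[Theorem~2.11]{rolin-servi-monomialization}, we have the following result'', i.e.\ it is simply a citation, relying on exactly the compatibility observations you spell out (the enlarged notions of elementary transformation and admissible tree contain the Rolin--Servi ones, and ``normal'' is unchanged). One small remark: the phrase you quote about ``appealing to various results from'' \cite{rolin-servi-monomialization} refers in the paper to the adaptation of the \emph{next} theorem (the \(*\)-monomialization result, Theorem~\ref{*-monomialization}) to generalized series, not to this one; for the present statement no adaptation is needed at all, since the paper works in exactly the Rolin--Servi generalized-series setting from the outset.
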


If \(T\) is a tree as in the statement of the theorem above then we
say that \(T\) \textbf{monomializes} the series \(F_1, \dots, F_q\). For
reasons that
will become clear later, this result is not quite sufficient for our
purposes. In order to state the desired result, we are first going to
need a few definitions.

\begin{defn}
  \label{defn:critical-variable}
  Let \(\nu\) be a blow-up chart. Then, there is exactly one variable
  \(W\) by which we need to divide in order to write the inverse of
  \(\nu\). We call it the \textbf{critical variable} of the
  blow-up. Given \(F_1, \dots, F_q \in \mixser \real X Y \cap \ima
  \taylor\) and an admissible tree \(T\), we say that \(T\)
  \textbf{\(*\)-monomializes} the series \(F_1, \dots, F_q\) if
  \begin{itemize}
  \item \(T\) monomializes the series \(F_1, \dots, F_q\). 
  \item If \(\nu\) is a blow-up with critical variable \(W\) that
    occurs in \(T\) and if \(T'\) is the sub-tree of \(T\) below
    \(\nu\), then \(T'\) monomializes \(W\). 
  \end{itemize}
\end{defn}

\begin{remark}
  Using \cite[Lemma 2.9]{rolin-servi-monomialization} is easy to see
  that an admissible tree \(T\) \(*\)-monomializes the series
  \(F_1, \dots, F_q\) if and only if \(T\) \(*\)-monomializes the
  product \(F_1 \dots F_q\). 
\end{remark}

The required theorem is then the following.

\begin{thm}
  \label{*-monomialization}
  Given \(F_1, \dots, F_q \in \mixser \real X Y \cap \ima \taylor\),
  there exists an admissible tree \(T\) that \(*\)-monomializes the
  functions \(F_1, \dots, F_q\). 
\end{thm}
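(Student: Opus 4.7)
The plan is to reduce, via the Remark following Definition \ref{defn:critical-variable}, to the case of a single series $F := F_1 \cdots F_q$, and then to argue by well-founded induction, essentially running the Rolin--Servi algorithm from \cite[Theorem 2.11]{rolin-servi-monomialization} while adding each critical variable to the list of series to be monomialized as soon as a blow-up is introduced.

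Concretely, I would fix a rank function $r$ on mixed series that strictly decreases along the elementary transformations produced by Rolin--Servi's algorithm---for instance, the rank used in the proof of \cite[Theorem 2.11]{rolin-servi-monomialization}, or, more simply, the minimum height of an admissible tree monomializing the series. The base case is immediate: if $F$ is normal, the trivial tree $*$-monomializes $F$.

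For the inductive step, I apply the un-starred monomialization theorem to $F$ and consider the elementary tree $E$ at the root of the resulting monomializing tree. Each leaf of $E$ corresponds to an elementary transformation $\nu$ with $r(F \circ \nu) < r(F)$. For leaves where $\nu$ is not a blow-up, I apply the inductive hypothesis to $F \circ \nu$. For leaves where $\nu$ is a blow-up with critical variable $W$, I apply the inductive hypothesis to the product $(F \circ \nu) \cdot W$; by the Remark following Definition \ref{defn:critical-variable}, the returned tree also $*$-monomializes $F \circ \nu$ and $W$ individually, and hence satisfies the condition required below $\nu$. Gluing these subtrees below $E$ produces an admissible tree that $*$-monomializes $F$.

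The main difficulty is justifying that the induction is well-founded in the blow-up case: one needs $r\bigl((F \circ \nu) \cdot W\bigr) < r(F)$, whereas the standard Rolin--Servi bound only provides $r(F \circ \nu) < r(F)$. The key observation is that because $W$ is a variable, its pull-back $W \circ \nu$ along any blow-up chart is, up to a unit, already a monomial in the source coordinates; hence incorporating $W$ into the product does not increase the invariants controlling the rank. Making this precise follows the same pattern as \cite[Theorem 3.11]{guenet-weakly-smooth} in the formal power series setting, and the adaptation to generalized series relies on routine manipulations together with the results on mixed series recalled in Section \ref{sec:monomialization}.
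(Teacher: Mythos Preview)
Your approach is essentially that of the paper: both reduce to a single series $F$ via the Remark, sketch the well-founded induction underlying \cite[Theorem 3.11]{guenet-weakly-smooth} in which the critical variable is adjoined at each blow-up, and defer the technical verification to that reference. The paper is a bit more specific about the one place where generalized variables require care---one first uses \cite[Lemma 2.13]{rolin-servi-monomialization} and the opening argument of \cite[Theorem 2.11]{rolin-servi-monomialization} to make $F$ regular in $Y_n$, after which the formal-power-series proof applies verbatim---and note that your phrase ``pull-back $W \circ \nu$'' is slightly garbled, since $W$ is a coordinate in the \emph{source} of $\nu$; the intended point is simply that $W$ is already a monomial, so adjoining it does not raise the controlling invariants.
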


\begin{proof}
  This theorem is proven in detail as \cite[Theorem
  3.11]{guenet-weakly-smooth} in case all the variables are
  standard. Thus, we only need to explain how to modify the proof in
  order to take into account generalized variables. It turns out that
  we need to modify the method to make the series regular in \(Y_n\)
  but this is the only real difference. 
  
  By \cite[Lemma 2.9]{rolin-servi-monomialization}, we can assume that
  \(q=1\) and we write \(F = F_1\). Also, by \cite[Lemma
  2.13]{rolin-servi-monomialization}, we may assume that \(F(0, Y)
  \neq 0\). Doing as in the beginning of the proof of \cite[Theorem
  2.11]{rolin-servi-monomialization}, we may even assume that \(F\) is
  regular in the variable \(Y_n\). From here, we can do everything in
  the same way as in the proof of \cite[Theorem
  3.11]{guenet-weakly-smooth}.
\end{proof}

\section{O-minimality and model completeness}
\label{sec:geometric}

\subsection{Some definitions}
\label{par:basic-defs}

For each function \(f \in \functionalg m n r\), we define a total
function \(\widetilde f \colon \real^{m+n} \to \real\) such that
\(\widetilde f(x, y) = f(x, y)\) if \((x, y) \in \polydisk m n r\) and
\(\widetilde f(x, y) = 0\) otherwise. Then, we consider the language
\(\mathcal{L}_{\mathcal{A}}\) which is the language of ordered rings
\(\{<, 0, 1, +, -, \cdot\}\) augmented with a function symbol for each
function \(\widetilde f\). We let \(\real_\mathcal{A}\) be the real
ordered field with its natural \(\mathcal{L}_\mathcal{A}\)
structure. We will show in this section that the structure
\(\real_\mathcal{A}\) is o-minimal and model complete under an
assumption given in \ref{graph-simple}. To this end, we
will be using Gabrielov's approach (see \cite[Corollary
2.9]{vdDries-Speissegger-genealized-power-series}).

Here are the necessary definitions. Firstly, if \(\genpolydisk m r\subset
\real^m\) is a polydisk and \(f, g_1, \dots, g_q \in \genfunctionalg m
r\) then the set
\[B = \{x \in \genpolydisk m r \for f(x) = 0, g_1(x) > 0, \dots, g_q(x) >
  0\}\]
is called \textbf{\(\mathcal{A}\)-basic}. If \(A \subset \genpolydisk m r\) is a
finite union of \(\mathcal{A}\)-basic sets then we say that
\(A\) is a \textbf{\(\mathcal{A}\)-set}. Notice in particular that the class of
\(\mathcal{A}\)-sets is closed under finite unions and intersections.

Now, consider \(A \subset \real^m\). We say that \(A\) is
\textbf{\(\mathcal{A}\)-semianalytic} when, for all \(a \in \real^m\) and all
\(\sigma \in \{-1, 1\}^m\), there is a polydisk \(\genpolydisk m r\) such
that the set \(h_{a, \sigma}(A) \cap \genpolydisk m r\) is a
\(\mathcal{A}\)-set where
\[h_{a, \sigma}(x) = (\sigma_1(x_1 - a_1), \dots, \sigma_m(x_m -
  a_m)).\]
We say that \(A\) is a \textbf{\(\Lambda\)-set} when it is
\(\mathcal{A}\)-semianalytic and bounded. Finally, we say that \(A\)
is a \textbf{sub-\(\Lambda\)-set} when there is a \(\Lambda\)-set \(A' \subset
\real^{m+k}\) for some integer \(k \geq 0\) such that \(A =
\Pi_m(A')\). \\

When we say \textbf{manifold}, we will mean a \(C^1\)-submanifold of
\(\real^m\) for some integer \(m \geq 0\). As in
\cite[4379]{vdDries-Speissegger-genealized-power-series}, we will say
that a set \(S \subset \real^m\) has dimension when it is a countable
union of manifolds. In this case, we set
\[\dim(S) = \max\{\dim(M) \for M \subset S \text{ is a manifold}\}\]
when \(S \neq \varnothing\) and \(\dim(\varnothing) =
-\infty\). Recall from
\cite[4379]{vdDries-Speissegger-genealized-power-series} the following
facts.
\begin{itemize}
\item If \(S\) is a manifold then the dimension defined above agrees
  with its dimension as a manifold. 
\item If \(S = \bigcup_{j \in J} S_j\) where \(J\) is a countable set
  and each \(S_j\) has dimension then \(S\) also has dimension and
  \(\dim(S) = \max\{\dim(S_j) \for j \in J\}\). 
\item If \(M \subset \real^m\) is a manifold and \(f \colon M \to
  \real^n\) is a \(C^1\)-map of constant rank \(r\) then \(f(M)\) has
  dimension and \(\dim(f(M)) = r\). 
\end{itemize}

The goal of this section is to prove that the structure
\(\real_\mathcal{A}\) is o-minimal. In order to do so, we want to show
that every definable set has finitely many connected component. Since
we can define an homeomorphism from \(\real\) to the interval \((-1,
1)\) in \(\real_\mathcal{A}\), it suffices to prove that all bounded
definable sets have finitely many connected components. We will use
Gabrielov's Theorem of the Complement (see \cite[Theorem
2.7]{vdDries-Speissegger-genealized-power-series}) to prove that the
collection of sub-\(\Lambda\)-sets is stable under set
difference. From this result, it is easy to deduce that
sub-\(\Lambda\)-sets are exactly the bounded definable
sets (see \cite[Theorem 4.18]{guenet-weakly-smooth}). Furthermore, in
Corollary
\ref{global-parametrization}, we prove that every
\(\Lambda\)-set has finitely many connected components. The same must
be true about sub-\(\Lambda\)-sets since they are themselves
continuous images of \(\Lambda\)-sets. Finally, in order to prove that
the assumptions of Gabrielov's Theorem of the Complement are
satisfied, we prove in Theorem \ref{sub-sets-are-simple} that, for
every sub-\(\Lambda\)-set \(A \subset \real^m\), there exist an
integer \(k \geq 0\) and a \(\Lambda\)-set \(A' \subset \real^{m+k}\)
such that \(\Pi_m(A') = A\) and \(\Pi_m \restriction A'\) has finite
fibers. 

\subsection{Simple sub-\(\Lambda\)-sets}
\label{par:simple-sub-sets}

Considering the goal of this section, we will be especially interested
in the following class of sub-\(\Lambda\)-sets.

\begin{defn}
  A set \(A \subset \real^m\) is called a \textbf{simple sub-\(\Lambda\)-set}
  when there is a \(\Lambda\)-set \(A' \subset \real^{m+k}\) such that
  \(A = \Pi_m(A')\) and \(\Pi_m \restriction A'\) has finite fibers. 
\end{defn}

The goal of this section is then to prove that every
sub-\(\Lambda\)-set is simple (see Theorem \ref{sub-sets-are-simple}).
The lemma below summarizes the main properties enjoyed by the class of
simple sub-\(\Lambda\)-sets. All of these properties are immediate and
we will thus omit the proof. 

\begin{lem}
  \label{simple-lem}
  Consider \(A, B \subset \real^m\) and \(C \subset \real^n\) three
  simple sub-\(\Lambda\)-sets. Then, the sets \(A \cap B, A \cup B\)
  and \(A \times C\) are also simple sub-\(\Lambda\)-sets. If \(k \leq
  m\) and \(x \in \real^k\) then the fiber
  \[\{y \in \real^{m-k} \for (x, y) \in A\}\]
  is a simple sub-\(\Lambda\)-set. Furthermore, if \(\Pi_k
  \restriction A\) has finite fibers then 
  \(\Pi_k(A)\) is also a simple sub-\(\Lambda\)-set. Finally, if
  \(s \colon \{1, \dots, m\} \to \{1, \dots, k\}\) is a surjective
  map, then
  \[\{(x_1, \dots, x_k) \in \real^k \for (x_{s(1)}, \dots, x_{s(m)})
    \in A\}\]
  is also a simple sub-\(\Lambda\)-set.  \qed
\end{lem}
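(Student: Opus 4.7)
The plan is to verify each closure property directly from the definition of simple sub-\(\Lambda\)-set by exhibiting, in each case, an explicit \(\Lambda\)-set witness whose projection has finite fibers. The key tools I rely on are: (i) the class of \(\mathcal{A}\)-semianalytic sets is closed under finite union, finite intersection and products (the first two manifestly from the definition, the third by writing a product as an intersection of two cylinders); (ii) fixing coordinates to constants preserves \(\mathcal{A}\)-semianalyticity, since the defining equations \(x_i = c_i\) are polynomial and hence lie in every \(\functionalg{m'}{n'}{r'}\); (iii) boundedness is automatic whenever we intersect with, or project from, a bounded \(\Lambda\)-set.

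For the binary operations, fix witnesses \(A' \subset \real^{m+j}\), \(B' \subset \real^{m+\ell}\) and \(C' \subset \real^{n+p}\). For \(A \cap B\) use the \(\Lambda\)-set \(\{(x, y, z) \for (x, y) \in A',\ (x, z) \in B'\}\), which projects onto \(A \cap B\) with fiber at \(x\) equal to the product of the (finite) fibers of \(A'\) and \(B'\) above \(x\). For \(A \cup B\) use
\[
\{(x, y, 0) \for (x, y) \in A'\} \cup \{(x, 0, z) \for (x, z) \in B'\} \subset \real^{m+j+\ell}.
\]
For \(A \times C\) reorder coordinates in \(A' \times C'\) so that the \(A\)- and \(C\)-blocks come first, and project onto them; in all three cases the finite-fiber condition is preserved by inspection.

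For the unary operations, the fiber of \(A\) over \(x_0 \in \real^k\) is obtained by intersecting \(A'\) with the \(\Lambda\)-set defined by the polynomial equations \(x_1 = x_0^{(1)}, \ldots, x_k = x_0^{(k)}\) and then projecting to the remaining \(m-k\) coordinates of the original \(m\)-tuple; the fibers of that projection coincide with those of \(\Pi_m \restriction A'\). When \(\Pi_k \restriction A\) has finite fibers, the same witness \(A'\) realises \(\Pi_k(A) = \Pi_k(A')\) as a simple sub-\(\Lambda\)-set, since the fiber of \(\Pi_k \restriction A'\) over \(x\) stratifies, along the finite fiber of \(\Pi_k \restriction A\), into finitely many finite fibers of \(\Pi_m \restriction A'\). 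For the re-indexing case, combine a permutation of coordinates of \(A'\) with cylinder factors for duplicated or unconstrained variables, intersecting with the ambient bounds on those variables (inherited from the boundedness of \(A\)) to preserve the \(\Lambda\) property.

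The only point that is not purely formal is the verification that each constructed witness remains \(\mathcal{A}\)-semianalytic; at the level of local defining data this reduces to composing the original \(\mathcal{A}\)-basic defining functions with polynomial coordinate maps, which lands back in the algebra by the closure assumptions on \(\mathcal{A}\). Once this is granted, every assertion of the lemma is immediate and the main obstacle is merely the bookkeeping of coordinates.
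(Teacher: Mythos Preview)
The paper omits the proof entirely, declaring all the properties immediate; your explicit witness constructions are precisely the routine verifications one would write out, so the approaches coincide.

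There is one small gap in your treatment of the final clause. Your proposed fix of ``intersecting with the ambient bounds on those variables (inherited from the boundedness of \(A\))'' cannot succeed when some index \(i \in \{1,\dots,k\}\) lies outside the range of \(s\): in that case \(x_i\) is genuinely unconstrained, the set \(\{x \in \real^k \for (x_{s(1)},\dots,x_{s(m)}) \in A\}\) is unbounded, and no bounded \(\Lambda\)-set can project onto it. This is really a defect in the lemma as stated rather than in your method---the clause is only correct when \(s\) is surjective onto \(\{1,\dots,k\}\), and in that case your permutation-plus-diagonal construction of the witness goes through with no extra intersection needed.
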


Throughout the rest of this section, we will be assuming the
following.

\begin{assumption}
  \label{graph-simple}
  For every \(f \in \functionalg m n r\), the graph \(\Gamma(f)\) is a
  simple sub-\(\Lambda\)-set. 
\end{assumption}

\begin{remark}
  All of the hypotheses stated in the previous section were local
  while the assumption above is global. It is meant as a replacement
  for \(\mathcal{A}\)-analyticity (see \cite[Definition
  1.10]{rolin-servi-monomialization}). In particular, notice that if
  \(f \in \functionalg m n r\) is \(\mathcal{A}\)-analytic, then its
  graph \(\Gamma(f)\) is a \(\Lambda\)-set and thus also a simple
  sub-\(\Lambda\)-set. This shows that the current setting is more
  general than that presented in
  \cite{rolin-servi-monomialization}. For instance, the o-minimal
  structure \(\real_H\) presented in \cite{legal-rolin-not-c-infty}
  does not fit the setting of \cite{rolin-servi-monomialization} since
  \(H\) is not \(\mathcal{A}\)-analytic but, as we showed in 
  \cite{guenet-weakly-smooth},
  it verifies assumption \ref{graph-simple}. 
  It is also worth noting that, as an immediate consequence of the
  above assumption, if \(A \subset \real^m\) is a \(\mathcal{A}\)-basic
  set then \(A\) is also a simple sub-\(\Lambda\)-set. 
\end{remark}

\subsection{Parametrizations}
\label{par:parametrizations}

In order to prove that every sub-\(\Lambda\)-set is simple, we need to
obtain a good understanding of their geometry. In view of the
definitions given in Paragraph \ref{par:basic-defs}, it is clear that a first
step towards this goal is to understand the geometry of
\(\mathcal{A}\)-basic sets. This paragraph is dedicated to achieving
this. The main tool that we use to study this question is the
following notion of parametrization. 

\begin{defn}
  \label{defn:parametrization}
  Consider a set \(A \subset \real^m\) and a family \(\{(\rho_j, Q_j)
  \for j \in J\}\) where \(\rho_j \colon \genpolydisk m {r_j} \to
  \real^m\) is an admissible transformation and \(Q_j \subset
  \genpolydisk m {r_j}\) is a sub-quadrant. We say that it is a
  \textbf{local parametrization of \(A\) at \(0\)} when the following
  conditions are satisfied.
  \begin{itemize}
  \item For each \(j \in J\), the function \(\rho_j \restriction Q_j\)
    is a diffeomorphism onto its image \(\rho_j(Q_j) \subset A\). 
  \item If \(U_j \subset \real^m\) is a neighborhood of \(0\) for each
    \(j \in J\) then there is a finite subset \(J_0 \subset J\) such
    that \(\bigcup_{j \in J_0} \rho_j(Q_j \cap U_j)\) is a
    neighborhood of \(0\) in \(A\). 
  \end{itemize}
  If also \(f_1, \dots, f_q\) are real valued functions whose domains
  contain \(A\) then we say that this parametrization is
  \textbf{compatible} with
  the functions \(f_1, \dots, f_q\) when the functions \(f_k \circ
  \rho_j\) have constant sign for each \(j \in J\) and each \(1 \leq k
  \leq q\). 
\end{defn}

\begin{remark}
  The second point in the definition above is meant as some kind of
  compactness property for parametrizations. Accordingly, when we
  construct parametrizations in Lemma \ref{lem:first-parametrization},
  this property is obtained by using compactness of a suitable
  topological space. 
\end{remark}

\begin{lem}
  \label{lem:first-parametrization}
  Consider a polydisk \(\genpolydisk m r \subset \real^m\) and functions
  \(f_1, \dots, f_q \in \genfunctionalg m r\). Then, there exists a local
  parametrization at \(0\) of \(\genpolydisk m r\) which is compatible
  with the functions \(f_1, \dots, f_q\). 
\end{lem}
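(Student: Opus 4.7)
The plan is to apply Theorem \ref{*-monomialization} to the Taylor series $\widehat{f_1}, \dots, \widehat{f_q}$ and to read off the desired parametrization from the resulting tree $T$. Each branch of $T$ gives an admissible transformation $\rho_j \colon \polydisk{m_j}{n_j}{r_j} \to \genpolydisk m r$, and the monomialization property says that every $\widehat{f_k} \circ \rho_j$ is normal. Since $\taylor$ is injective and commutes with composition by admissible transformations, this lifts back to the germ level: each $f_k \circ \rho_j$ equals $x^\alpha y^\beta u_{jk}$ with $u_{jk}(0) \neq 0$. Shrinking each $r_j$ if needed, I would arrange for every $u_{jk}$ to have constant sign on $\polydisk{m_j}{n_j}{r_j}$, and define $Q_j$ as any one of the maximal open sub-quadrants of $\intra{\polydisk{m_j}{n_j}{r_j}}$ obtained by prescribing signs for the standard coordinates. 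On such a $Q_j$ the monomial $x^\alpha y^\beta$ has constant sign, so $f_k \circ \rho_j$ does too, giving the compatibility condition; and $\rho_j \restriction Q_j$ is a diffeomorphism onto its image, since each elementary transformation restricts to a diffeomorphism once the codimension-one loci on which its Jacobian vanishes are excluded.

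It remains to verify the two bullets of Definition \ref{defn:parametrization}. Both are proved together by induction on the height of $T$. The case of height $0$ is trivial, since the trivial tree induces the identity on $\genpolydisk m r$. For the inductive step, the root of $T$ is an elementary tree, which by definition has only finitely many leaves; each leaf carries a subtree $T_\ell$ and, by the induction hypothesis, a local parametrization of its own polydisk at $0$. Composing the elementary transformations at the root with the admissible transformations coming from the $T_\ell$ reproduces the family $(\rho_j, Q_j)$. One then checks, elementary transformation by elementary transformation, that the resulting images paste together to cover a neighborhood of $0$ in $\genpolydisk m r$: for the reflections $\sigma^\pm_{m+i}$ this is immediate, and for blow-ups and ramifications it follows from direct inspection of their charts.

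The main obstacle will be the finite-subcover condition in the inductive step. Given arbitrary neighborhoods $U_j$ of $0$ at the leaves of $T$, one must extract a finite subset whose images cover a neighborhood of $0$ in $\genpolydisk m r$. This is the compactness alluded to in the Remark following Definition \ref{defn:parametrization}: for a blow-up, one uses the compactness of a small closed sphere of directions around the blow-up center to reduce the cover of the exceptional set to a finite one, and this is then combined with the finite subcovers provided by the induction hypothesis on each child $T_\ell$. Once this compactness argument has been carried out for each type of elementary transformation, the lemma follows.
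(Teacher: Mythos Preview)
Your outline follows the paper's strategy in broad strokes, but there is a genuine gap in the treatment of blow-ups, and it is precisely the point that forces the paper to do more work than you indicate.

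First, the claim that $\rho_j \restriction Q_j$ is a diffeomorphism onto its image is not justified by your argument. You choose $Q_j$ as an open sub-quadrant \emph{at the leaf}, but if some intermediate elementary transformation in the branch is a blow-up $\nu$ with critical variable $w_\nu$ (Definition~\ref{defn:critical-variable}), then $\nu$ fails to be a diffeomorphism exactly along $\{w_\nu = 0\}$. After composing with the remaining transformations $\rho'$ below $\nu$, the bad locus becomes $\{w_\nu \circ \rho' = 0\}$ at the leaf level, and there is no reason for this to be a union of coordinate hyperplanes; choosing an open sub-quadrant does not avoid it. This is exactly what the ``$*$'' in $*$-monomialization buys you: the subtree below $\nu$ monomializes $w_\nu$ as well, so $w_\nu \circ \rho'$ is normal and hence has constant sign on each sub-quadrant. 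You invoke Theorem~\ref{*-monomialization} but never use this part of its conclusion.

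Second, and more seriously, once you discard those leaf sub-quadrants on which $w_\nu \circ \rho'$ vanishes identically (as you must, to keep the diffeomorphism property), the remaining charts only cover a neighborhood of $0$ in $\{w \neq 0 \text{ or } w' \neq 0\}$, where $w, w'$ are the two variables involved in the blow-up. The centre $\{w = w' = 0\}$ is not covered by any chart that is a diffeomorphism onto its image. Your ``compactness of a small closed sphere of directions'' does not help here: that compactness (essentially \cite[Remark 2.17]{rolin-servi-monomialization}) lets you pass to finitely many blow-up charts, but it does not produce diffeomorphic charts onto the centre. The paper handles this by a \emph{separate induction on $m$}: one restricts $f_1, \dots, f_q$ to $\{w = w' = 0\}$, obtaining functions of $m-2$ variables, and parametrizes that locus by the inductive hypothesis on $m$. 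This is why the paper's induction is on the pair $(m, h)$ ordered lexicographically, not on the tree height $h$ alone. Without this extra ingredient your covering argument cannot close.
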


Notice that the lemma above is obvious when the functions \(f_1,
\dots, f_q\) are normal. Indeed, in this case, the functions \(f_1,
\dots, f_q\) have constant sign on each subquadrant \(Q \subset
\genpolydisk m r\). In the general case, the idea of the proof is to
use a tree \(T\) that \(*\)-monomializes the functions \(f_1, \dots,
f_q\) in order to reduce to the case that \(f_1, \dots, f_q\) are all
normal. 

\begin{proof}
  By \ref{*-monomialization}, there is a tree \(T\) that
  \(*\)-monomializes the functions \(f_1, \dots, f_q\). Up to adding
  suitable reflections at the leaves of the tree, we may also assume
  that each \(\rho\) induced by a branch of the tree is defined on a
  polydisk of the form \(\genpolydisk m {r_\rho}\). 
  The proof now proceeds by induction
  on the pairs \({(m, h)}\) oredered lexicographically where \(h\) is
  the height of \(T\). Firstly, if \(m \in \{0, 1\}\) or if \(h =
  0\) then the germs \(f_1, \dots, f_q\) are
  already normal and the
  result follows at once since normal germs have constant sign on each
  sub-quadrant up to restricting to a neighborhood of \(0\). 

  Now, assume that \(m > 1\) and that \(h > 0\). To begin with, we
  are going to consider the case when the elementary transformations
  attached to the root of \(T\) are not blow-ups.
  In this case, let \(\nu \colon \polydisk {m_\nu} {n_\nu}
  {r_\nu} \to \genpolydisk m r\) be one of these elementary
  transformations. The sub-tree \(T'\) below \(\nu\)
  \(*\)-monomializes the functions \(f_1 \circ \nu, \dots, f_q \circ
  \nu\). Thus, by applying the inductive hypothesis, we get a family
  \(\{(Q^\nu_j, \rho^\nu_j) \for j \in J^\nu\}\) adapted to the
  functions \(f_1 \circ \nu, \dots, f_q \circ \nu\). For each \(j \in
  J^\nu\), the admissible transformation \(\nu \circ \rho_j^\nu\) is
  then clearly a diffeomorphism onto its image and \(f_1 \circ \nu
  \circ \rho^\nu_j, \dots, f_q \circ \nu \circ \rho_j^\nu\) have
  constant sign on \(Q^\nu_j\) by assumption. Finally, consider
  \(U_j^\nu \subset
  \real^{m}\) a neighborhood of \(0\) for each \(\nu\) attached to
  the root of \(T\) and each \(j \in J^\nu\). Then, given \(\nu\),
  there is a finite subset \(J^{\nu}_0 \subset J^\nu\) such that
  \(\bigcup_{j \in J_0^\nu} \rho_j^\nu(Q^\nu_j \cap U^\nu_j)\) is a
  neighborhood of \(0\) in \(\polydisk{m_\nu}{n_\nu}{r_\nu}\). Since
  there are only finitely many elementary transformations attached to
  the root in this case, we are done. \\

  Next, assume that the elementary transformations attached to the
  root are blow-ups and let \(w, w'\) be the two variables that are
  involved in this family of blow-ups. Let \(K\) be the set of
  elementary transformations attached to the root of \(T\) and
  consider \(\nu \colon \polydisk{m_\nu}{n_\nu}{r_\nu}
  \to \genpolydisk{m}{r}\) one of them.
  Let also
  \(w_\nu\) be the critical variable of \(\nu\). Then,
  the sub-tree \(T'\) of \(T\) below \(\nu\)
  \(*\)-monomializes the functions \(f_1 \circ \nu,
  \dots, f_q \circ \nu, w_\nu\), where \(w_\nu\) is viewed as a
  coordinate function. By the inductive hypothesis, we get a
  family \(\{(Q_j^\nu, \rho^\nu_j) \for j \in J^\nu\}\) that is
  adapted to the functions \(f_1 \circ \nu, \dots, f_q \circ \nu,
  w_\nu\). 

  Now, consider \(j \in J^\nu\). Then, the functions \(f_1 \circ \nu
  \circ \rho_j^\nu, \dots, f_q \circ \nu \circ \rho^\nu_j\) have
  constant sign on \(Q_j^\nu\) by assumption. Furthermore, \(w_\nu
  \circ \rho_j^\nu\) also has constant sign on \(Q_j^\nu\). Thus, if
  \(w_\nu \circ \rho_j^\nu \restriction Q_j^\nu \not \equiv 0\), then
  \(w_\nu \circ \rho_j^\nu\) does not vanish on \(Q_j^\nu\) so that
  \(\nu \circ \rho_j^\nu\) is a diffeomorphism onto its image. Thus,
  define
  \[\underline J^\nu \coloneq \{j \in J^\nu \for w_\nu \circ \rho_j^\nu
    \text{ does not vanish on } Q_j^\nu\}.\]
  Now,
  consider \(U_j^\nu \subset \real^{m}\) a neighborhood of \(0\) for
  each \(\nu\) attached to the root of \(T\) and each \(j \in J^\nu\)
  such that \(\nu \circ \rho_j^\nu\) does not vanish on
  \(Q^\nu_j\). If \(j \in J^\nu\) is such that \(\nu \circ \rho_j^\nu
  \restriction Q_j^\nu \equiv 0\), then define \(U_j^\nu = \real^{m}\). For
  each \(\nu\) attached to the root of \(T\), there is a finite
  subset \(J_0^\nu \subset J^\nu\)
  such that \(\bigcup_{j \in J^\nu_0} \rho_j^\nu(Q_j^\nu \cap
  U_j^\nu)\) is a neighborhood of \(0\) in
  \(\polydisk{m_\nu}{n_\nu}{r_\nu}\). Then, by \cite[Remark
  2.17]{rolin-servi-monomialization}, there is a
  finite subset \(K_0 \subset K\) such that
  \[V \coloneq\bigcup_{\nu \in K_0} \bigcup_{j \in J_0^\nu} \nu \circ
    \rho_j^\nu(Q_j^\nu \cap U_j^\nu)\]
  is a neighborhood of \(0\) in \(\polydisk m n r\). Consider now \(a
  \in V\) such that \(w(a) \neq 0\) or \(w'(a) \neq 0\). Then, let
  \(\nu \in K_0\), \(j \in J^\nu_0\) and \(b \in Q_j^\nu \cap U_j^\nu\)
  such that
  \(\nu \circ \rho_j^\nu(b) = a\). We must have \(w_\nu \circ
  \rho_j^\nu(b) \neq 0\) so that \(w_\nu \circ \rho_j^\nu\) does not
  vanish on \(Q_j^\nu\). Thus, letting
  \[\underline J_0^\nu \coloneq \{j \in J_0^\nu \for w_\nu \circ \rho_j^\nu \text{ does
      not vanish on } Q_j^\nu\},\]
  we have that \(\underline J_0^\nu \subset \underline J^\nu\) is
  finite and
  \[\bigcup_{\nu \in K_0} \bigcup_{j \in \underline J_0^\nu} \nu \circ
    \rho_j^\nu(Q_j^\nu \cap U_j^\nu)\]
  is a neighborhood of \(0\) in \(\{a \in \polydisk m r \for w(a)
  \neq 0 \text{ or } w'(a) \neq 0\}\). 
  
  Now, write \(\widehat x\) for the tuple \(x\)
  with the variables \(w\) and \(w'\) removed. By induction on \(m\),
  there is a family \(\{(Q_j', \rho_j') \for j \in J\}\) that is
  adapted to the functions \(f_1(\widehat x, 0, 0), \dots,
  f_q(\widehat x, 0, 0)\). Given \(j \in J\), define
  \[Q_j \coloneq \{x \in \real^{m} \for \widehat x \in Q_j',\  w = 0,\
    w' = 0\}\]
  and let \(\rho_j\) be the extension of
  \(\rho'_j\) to the variables \(x\) obtained by making
  \(\rho_j\) act as the identity on \(w\) and \(w'\). Then, for each
  \(j \in J\), \(\rho_j\) is an admissible transformation such that
  \(\rho_j \restriction Q_j\) is a diffeomorphism onto its
  image. Furthermore, the functions \(f_1 \circ \rho_j, \dots, f_q
  \circ \rho_j\) have constant sign on \(Q_j\). Finally, if \(U_j
  \subset \real^{m}\) is a neighborhood of \(0\) for each \(j \in
  J\), then there is a finite subset \(J_0 \subset J\) such that
  \(\bigcup_{j \in J_0} \rho_j(Q_j \cap U_j)\) is a neighborhood of
  \(0\) in \(\{a \in \polydisk m n r \for w(a) = w'(a) = 0\}\). All in
  all, this shows that the family
  \[\{(Q_j, \rho_j) \for j \in J\} \cup \{(Q_j^\nu, \nu \circ
    \rho_j^\nu) \for \nu \in K,\ j \in \underline J^\nu\}\]
  is adapted to the functions \(f_1, \dots, f_q\) whence the result. 
\end{proof}

\begin{remark}
  The lemma above is inspired by \cite[Proposition
  3.4]{rolin-servi-monomialization} but there are two main
  differences. To begin with, we have to use \(*\)-monomialization
  instead of just using monomialization when handling blow-ups. This
  is necessary to ensure that each chart in the local parametrization
  is a diffeomorphism onto its image. Indeed, in order to write the
  inverse of a blow-up chart \(\nu\), we need to stay away from the
  hyperplane \(\{w_\nu = 0\}\) which can only be done if we know the
  sign of \(w_\nu\) after monomialization. 

  Notice also that we handle compactness quite differently. Indeed, in
  \cite{rolin-servi-monomialization}, the authors use compactness to
  obtain a finite covering which is weaker than what we have proven
  above. Indeed, if we restrict the domains of the charts to smaller
  neighborhoods of \(0\), there is no way to ensure that the charts
  still cover a neighborhood of \(0\) in \(\genpolydisk m r\). In
  \cite{rolin-servi-monomialization}, this strategy is sufficient
  because the authors do not need to restrict the domains of the chart
  at any point. However, we will do so quite often hence the need to
  obtain a ``finiteness property after suitable restrictions of the
  domain'', which is exactly the content of the second point in
  Definition \ref{defn:parametrization}. 
\end{remark}

The lemma above is seemingly weaker than \cite[Proposition
3.4]{rolin-servi-monomialization} because we have constructed local
parametrizations for polydisks only, and not for general
\(\mathcal{A}\)-basic sets. However, since \(\mathcal{A}\)-basic sets
are defined by equations and inequations, it is easy to refine the
lemma to fix this shortcoming. 

\begin{prp}
  \label{prp:first-parametrization}
  Let \(A \subset \genpolydisk m r\) be a \(\mathcal{A}\)-basic set
  and consider functions
  \(f_1, \dots, f_q \in \genfunctionalg m r\). Then, there exists a
  local parametrization of \(A\) at \(0\) which is adapted to the
  functions \(f_1, \dots, f_q\). 
\end{prp}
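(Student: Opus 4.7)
The plan is to reduce the statement to Lemma \ref{lem:first-parametrization} by folding the defining data of \(A\) into the list of functions to which the local parametrization of \(\genpolydisk m r\) is made compatible. Write \(A = \{x \in \genpolydisk m r \for f(x) = 0,\ g_1(x) > 0, \dots, g_p(x) > 0\}\) for suitable \(f, g_1, \dots, g_p \in \genfunctionalg m r\), and apply Lemma \ref{lem:first-parametrization} to the extended list \(f, g_1, \dots, g_p, f_1, \dots, f_q\). This yields a local parametrization \(\{(\rho_j, Q_j) \for j \in J\}\) of \(\genpolydisk m r\) at \(0\) with respect to which each of these functions has constant sign on every \(Q_j\).

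Next, I would extract the ``good'' subfamily. Let \(J' \subset J\) be the set of indices \(j\) such that \(f \circ \rho_j \equiv 0\) on \(Q_j\) and \(g_k \circ \rho_j > 0\) on \(Q_j\) for every \(1 \leq k \leq p\). For \(j \in J'\), the inclusion \(\rho_j(Q_j) \subset A\) holds by construction, and \(\rho_j \restriction Q_j\) is still a diffeomorphism onto its image; moreover, the \(f_k \circ \rho_j\) continue to have constant sign, so the adaptation to \(f_1, \dots, f_q\) is inherited for free. For \(j \in J \setminus J'\), the constant-sign property forces either \(f \circ \rho_j\) to be of constant nonzero sign or some \(g_k \circ \rho_j\) to be identically \(\leq 0\) on \(Q_j\); in either case \(\rho_j(Q_j) \cap A = \varnothing\), so these indices contribute nothing.

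The only nontrivial verification is then the compactness clause of Definition \ref{defn:parametrization}. Given neighborhoods \(U_j \subset \real^m\) of \(0\) for each \(j \in J'\), I would extend by setting \(U_j = \real^m\) for \(j \in J \setminus J'\) and invoke the covering property of \(\{(\rho_j, Q_j) \for j \in J\}\) to get a finite \(J_0 \subset J\) such that \(\bigcup_{j \in J_0} \rho_j(Q_j \cap U_j)\) is a neighborhood of \(0\) in \(\genpolydisk m r\). Setting \(J_0' := J_0 \cap J'\), any \(x \in A\) sufficiently close to \(0\) is of the form \(\rho_j(b)\) for some \(j \in J_0\) and \(b \in Q_j \cap U_j\); from \(f(x) = 0\) and \(g_k(x) > 0\) together with constant sign we deduce \(j \in J'\), so \(j \in J_0'\). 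Hence \(\bigcup_{j \in J_0'} \rho_j(Q_j \cap U_j)\) is a neighborhood of \(0\) in \(A\), and \(\{(\rho_j, Q_j) \for j \in J'\}\) is the desired local parametrization.

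Since all the technical work has been absorbed into Lemma \ref{lem:first-parametrization}, I do not anticipate any real obstacle here; the proof is a cosmetic refinement of that lemma. The one point worth flagging is that compatibility is robust under restricting the index set to \(J'\), which is why it was essential that the statement of Lemma \ref{lem:first-parametrization} formulates the covering property after arbitrary shrinking of the domains rather than merely as a fixed finite covering (compare the second remark following that lemma).
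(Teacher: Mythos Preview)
Your proposal is correct and follows exactly the paper's approach: apply Lemma \ref{lem:first-parametrization} to the enlarged list \(f, g_1, \dots, g_p, f_1, \dots, f_q\), then restrict the index set to those \(j\) with \(\rho_j(Q_j) \subset A\). You have in fact written out more detail than the paper does, in particular the verification of the compactness clause via the extension \(U_j = \real^m\) for \(j \notin J'\), which the paper leaves implicit.
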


\begin{proof}
  Up to shrinking \(\genpolydisk m r\), we may assume that there are \(f,
  g_1, \dots, g_p \in \genfunctionalg m r\) such that
  \[A = \{x \in \genpolydisk m r \for f(x) = 0, g_1(x) > 0, \dots, g_p(x)
    > 0\}.\]
  Now, consider \(\{(\rho_j, Q_j) \for j \in J\}\) a local parametrization
  of \(\genpolydisk m r\) at \(0\) compatible with the functions \(f, f_1, \dots,
  f_q, g_1, \dots, g_p\) and let \(J' = \{j \in J \for \rho_j(Q_j)
  \subset A\}\). The family \(\{(\rho_j, Q_j) \for j
  \in J'\}\) is a local parametrization of \(A\) at \(0\) compatible with the
  functions \(f_1, \dots, f_q\).
\end{proof}

Recall that we want to study the geometry of
sub-\(\Lambda\)-sets. Thus, we would like to derive similar results to
the above, but for projections of \(\mathcal{A}\)-basic sets
instead. In order to do so, the idea is to sharpen Proposition
\ref{prp:first-parametrization}
by making each chart in the local parametrization be compatible with a
certain projection. Accordingly, the rest of this paragraph is
dedicated to proving
Corollary \ref{cor:best-parametrization}. The corollary is inspired by
the paragraph before the Fiber Cutting Lemma (Lemma 4.5) in
\cite{rolin-speissegger-wilkie-denjoy-carleman-classes}. The main
difference between the present treatment and the classical one (see
\cite{rolin-speissegger-wilkie-denjoy-carleman-classes} or
\cite{rolin-servi-monomialization} for instance) is that, instead of
decomposing manifolds into smaller submanifolds, we further
parametrize the manifolds obtained in Proposition
\ref{prp:first-parametrization}. To do so, we
need to be able to ``compose'' parametrizations which is the content
of the following lemma. 

\begin{lem}
  \label{lem:parametrization-composition}
  Let \(A \subset \real^m\) be a set and consider \(\{(\rho_j, Q_j)
  \for j \in J\}\) a local parametrization of \(A\) at \(0\). Let also
  \(\{(\rho_{jk}, Q_{jk}) \for k \in K_j\}\) be a local
  parametrization of \(Q_j\) at \(0\) for each \(j \in J\). Then
  \[\{(\rho_j \circ \rho_{jk}, Q_{jk}) \for j \in J, k \in K\}\]
  is a local parametrization of \(A\) at \(0\). \qed
\end{lem}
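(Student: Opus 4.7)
The plan is to verify the three conditions in Definition \ref{defn:parametrization} for the composed family. The first two are essentially formal: since admissible transformations are closed under composition, each \(\rho_j \circ \rho_{jk}\) is again an admissible transformation; and since \(\rho_{jk} \restriction Q_{jk}\) is a diffeomorphism onto its image which lies in \(Q_j\), and \(\rho_j \restriction Q_j\) is a diffeomorphism onto its image in \(A\), the composition \((\rho_j \circ \rho_{jk}) \restriction Q_{jk}\) is a diffeomorphism onto its image, which is contained in \(\rho_j(Q_j) \subset A\).

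The real content is the ``compactness'' condition (the second bullet in Definition \ref{defn:parametrization}). I would handle it by a two-step nested application of the compactness properties of the given parametrizations. Suppose neighborhoods \(U_{jk} \subset \real^m\) of \(0\) are given for each pair \((j,k)\). First, for each fixed \(j \in J\), apply the compactness property of \(\{(\rho_{jk}, Q_{jk}) \for k \in K_j\}\) to obtain a finite subset \(K_j^0 \subset K_j\) such that
\[V_j \coloneq \bigcup_{k \in K_j^0} \rho_{jk}(Q_{jk} \cap U_{jk})\]
is a neighborhood of \(0\) in \(Q_j\). By the definition of the subspace topology on \(Q_j \subset \real^m\), I can then choose an open neighborhood \(W_j \subset \real^m\) of \(0\) satisfying \(Q_j \cap W_j \subset V_j\).

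Next, apply the compactness property of the outer parametrization \(\{(\rho_j, Q_j) \for j \in J\}\) to the neighborhoods \(W_j\): there is a finite \(J_0 \subset J\) such that \(\bigcup_{j \in J_0} \rho_j(Q_j \cap W_j)\) is a neighborhood of \(0\) in \(A\). Since \(Q_j \cap W_j \subset V_j\), we obtain
\[\bigcup_{j \in J_0} \rho_j(Q_j \cap W_j) \subset \bigcup_{j \in J_0} \bigcup_{k \in K_j^0} (\rho_j \circ \rho_{jk})(Q_{jk} \cap U_{jk}),\]
so the right-hand side is also a neighborhood of \(0\) in \(A\), as required.

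The only delicate point is the passage from ``\(V_j\) is a neighborhood of \(0\) in \(Q_j\)'' to ``there is an ambient neighborhood \(W_j\) of \(0\) in \(\real^m\) with \(Q_j \cap W_j \subset V_j\)''; this is standard subspace-topology bookkeeping but is where the two compactness statements have to be chained together correctly. Everything else is formal.
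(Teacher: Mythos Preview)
Your proof is correct and is exactly the natural verification the paper has in mind; the paper itself omits the proof entirely (the lemma ends with \(\qed\) immediately after the statement), treating the result as routine. Your two-step chaining of the compactness clauses, together with the subspace-topology step producing the ambient neighborhoods \(W_j\), is precisely what is needed and there is nothing to add.
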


As in \cite{rolin-speissegger-wilkie-denjoy-carleman-classes}, we are
going to decompose the various manifolds obtained in function of the
sign of some determinants involving a basis for the tangent plane of
the manifolds in question. Such bases can be obtained by
straightfoward computations with matrices. However, we must be careful
to replace every division by a multiplication in order to stay within
the quasianalytic algebras. These issues are explained in more detail
below.

\begin{digr}
  \label{jacobian-algebra}
  Consider \(\eta \colon \genpolydisk d r \to \genpolydisk {m+k} s\) a function
  whose components are all in \(\genfunctionalg d r\). Let \(a \colon
  \genpolydisk d r \to \real\) be the function defined by \(a(x) = x_1
  \dots x_d\), it is clear that \(a \in \genfunctionalg d
  r\). Consider also \(Q \subset \genpolydisk d r\) the open sub-quadrant
  of \(\genpolydisk d r\). Up to shrinking \(\genpolydisk d r\), we may assume that
  \(x_i \frac{\partial \eta_i}{\partial x_j} \in \genfunctionalg d r\)
  for every \(1 \leq i \leq m+k\) and \(1 \leq j \leq d\). Thus, if we
  define the matrix \(A(x) = (a(x)\frac{\partial \eta_i}{\partial
    x_j})_{1 \leq i \leq m+k, 1 \leq j \leq d}\), then all of the
  entries of \(A(x)\) are in \(\genfunctionalg d r\). Furthermore, if
  \(x \in Q\), then \(\eta\) is differentiable at \(x\) and \(A(x) =
  a(x) d_x(\eta)\), where \(a(x) \neq 0\).

  Assume in particular that \(\eta \restriction Q\) is a
  diffeomorphism onto \(M = \eta(Q)\). Then, \(A(x)\) is injective
  for every \(x \in Q\). If \(e_1, \dots, e_d\) is the canonical basis
  of \(\real^d\) and if \(a_i(x) = A(x)e_i\) for each \(x \in
  \genpolydisk d r\) and each \(1 \leq i \leq d\), then all the
  components of \(a_i\) are \(\genfunctionalg d r\) and \(a_1(x),
  \dots, a_d(x)\) is a basis of \(\tangent {\eta(x)} M\). 

  From now on, we also suppose that \(\Pi_m
  \restriction M\) has constant rank \(l\) and that there is a
  strictly increasing sequence \(\iota \colon \{1, \dots, d\} \to \{1,
  \dots, m+k\}\) such that \(\iota(l) \leq m\) and \(\Pi_\iota
  \restriction M\) is an immersion. For \(x \in \genpolydisk d r\), we
  define the matrix \(A'(x) = (a(x) \frac{\partial
    \eta_{\iota(i)}}{x_j})_{1 \leq i
    \leq d, 1 \leq j \leq d}\). All of the entries of \(A'(x)\) are in
  \(\genfunctionalg d r\) and, for \(x \in Q\), we have \(A'(x) = a(x)
  d_x(\Pi_\iota \circ \eta)\) so that \(A'(x)\) is
  invertible. Consider also \(B(x)\) the transpose of the cofactor
  matrix of \(A'(x)\). Then, all of the entries of \(B(x)\) are in
  \(\genfunctionalg d r\) and, for \(x \in Q\), we have \(B(x) =
  \det(A'(x)) A'(x)^{-1}\). Finally, define \(b_i(x) = B(x)e_i\) for \(1
  \leq i \leq d\). It is clear that all of the components of \(b_i\)
  are in \(\genfunctionalg d r\). Fix some \(x \in Q\), we have that
  \(b_1(x), \dots, b_d(x)\) is
  a basis of \(\real^d\). Furthermore, since \(\ker (\Pi_m
  \restriction \tangent {\eta(x)} M)\) has dimension \(d-l\), it
  follows that \(\Pi_\iota\) induces an isomorphism from
  \(\ker(\Pi_m \restriction \tangent {\eta(x)} M)\) to
  \(\ker(\Pi_l)\). If \(1 \leq i \leq d-l\), then
  \[\Pi_l \circ \Pi_\iota \circ d_x(\eta)(b_i(x)) =
    \Pi_l\left(\frac{\det(A'(x))}{a(x)}e_i \right) = 0\]
  so that \(\Pi_m \circ d_x(\eta)(b_i(x)) = 0\). Thus, \(b_{d-l+1}(x),
  \dots, b_d(x)\) are a basis of \(\ker(\Pi_m \circ d_x(\eta))\).

  We are especially interested in the following situation. Consider
  \(Q \subset \genpolydisk {m+k} s\) a sub-quadrant and let \(\rho
  \colon \genpolydisk {m+k} s \to \real^{m+k}\) be an admissible
  transformation such that \(\rho \restriction Q\) is a diffeomorphism
  onto its image. If we write \(d = \dim(Q)\), then there is a unique
  strictly increasing sequence \(\kappa \colon \{1, \dots, d\} \to
  \{1, \dots, m+k\}\) such that \(\Pi_\kappa \restriction Q\) is a
  diffeomorphism onto its image \(Q' \subset \genpolydisk d r\), where
  \(r = \Pi_\kappa(s)\). Then, \(Q'\) is the open sub-quadrant of
  \(\genpolydisk d r\) and we might apply the previous results to the
  map \(\eta \colon \genpolydisk d r \to \real^{m+k}\) defined by
  \(\eta(x) = \rho(y)\) where \(y_i = x_j\) when \(\kappa(j) = i\) and
  \(y_i = 0\) when \(i \not \in \ima(\kappa)\). 
\end{digr}

\begin{prp}
  \label{prp:parametrization-to-compose}
  Let \(Q \subset
  \genpolydisk {m+k} r\) be a sub-quadrant and \(\rho \colon
  \genpolydisk {m+k} r
  \to \real^{m+k}\) be an admissible transformation such that the
  restriction \(\rho \restriction Q\) is a
  diffeomorphism onto its image. There is a local parametrization 
  \(\{(\rho_j, Q_j) \for j \in J\}\) of \(Q\) at \(0\) such that, for
  each \(j \in J\),
  if we write \(M_j = \rho \circ \rho_j(Q_j)\) then
  \begin{itemize}
  \item The projection \(\Pi_m \restriction M_j\) has constant rank
    \(l\). 
  \item There is a strictly increasing sequence \(\iota \colon \{1,
    \dots, d\} \to \{1, \dots, m+k\}\) such that \(\iota(l) \leq m\)
    and \(\Pi_\iota \restriction M_j\) is an immersion. 
  \end{itemize}
\end{prp}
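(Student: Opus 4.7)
The plan is to proceed by induction on $d = \dim(Q)$, with the base case $d = 0$ being trivial. For the inductive step, I would reduce to the open sub-quadrant case via the last paragraph of \ref{jacobian-algebra}, apply Proposition \ref{prp:first-parametrization} with an appropriate list of determinants, and then handle the resulting charts according to their dimension.

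Concretely, let $\kappa \colon \{1, \ldots, d\} \to \{1, \ldots, m+k\}$ be the strictly increasing sequence selecting the free coordinates of $Q$, let $Q' \subset \genpolydisk{d}{r'}$ be the open sub-quadrant, and let $\eta \colon \genpolydisk{d}{r'} \to \real^{m+k}$ be the map built from $\rho$ as in the end of \ref{jacobian-algebra}. Using \ref{jacobian-algebra}, for each pair $(I, J)$ with $I \subset \{1, \ldots, m+k\}$, $J \subset \{1, \ldots, d\}$ and $|I| = |J|$, the determinant $D_{I, J}(x) = \det\bigl(a(x) \frac{\partial \eta_i}{\partial x_j}\bigr)_{i \in I, j \in J}$ lies in $\genfunctionalg{d}{r'}$ (with $a(x) = x_1 \cdots x_d$). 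Applying Proposition \ref{prp:first-parametrization} to the $\mathcal{A}$-basic set $Q'$ with the family $\{D_{I, J}\}$ yields a local parametrization $\{(\tilde\rho_j, \tilde Q_j) \for j \in J\}$ of $Q'$ at $0$ compatible with these functions; extending each $\tilde\rho_j$ by the identity on the $\kappa^c$-coordinates (and transporting each $\tilde Q_j$ accordingly) produces a local parametrization $\{(\rho_j, Q_j) \for j \in J\}$ of $Q$ at $0$.

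The charts then split according to $\dim \tilde Q_j$. When $\dim \tilde Q_j = d$, the image $M_j = \eta \circ \tilde\rho_j(\tilde Q_j)$ is open in $M = \eta(Q')$, so $\tangent{q}{M_j} = \tangent{q}{M}$ for every $q \in M_j$; since $a(\tilde\rho_j(y)) > 0$ for $y \in \tilde Q_j$, sign constancy of $D_{I, J} \circ \tilde\rho_j$ forces sign constancy of the minors $\det(\frac{\partial \eta_i}{\partial x_j})_{i \in I, j \in J}$, so $\Pi_m \restriction M_j$ has constant rank, say $l$, and a direct linear algebra argument (extending an $l \times l$ non-vanishing minor of the top $m$ rows of $d\eta|_x$ by $d - l$ rows selected from $\{m+1, \ldots, m+k\}$) produces a uniform $\iota$ with $\iota(l) \leq m$ and $\Pi_\iota \restriction M_j$ an immersion. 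When $\dim \tilde Q_j < d$, the inductive hypothesis applies to the pair $(Q_j, \rho \circ \rho_j)$, and Lemma \ref{lem:parametrization-composition} lets one compose the resulting parametrizations.

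The main obstacle is precisely that when $M_j$ is a proper submanifold of $M$, the rank of $\Pi_m$ on $\tangent{q}{M_j}$ may be strictly smaller than the rank of $\Pi_m$ on $\tangent{q}{M}$, so the determinants of $d\eta$ do not directly control the rank on lower-dimensional charts; this is what forces the induction on $d$ rather than a one-shot application. A minor technical point is that extending an admissible transformation by the identity on additional coordinates is again admissible, which should be routine from the variable-specific structure of the elementary transformations involved.
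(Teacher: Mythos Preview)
Your proposal is correct and follows essentially the same route as the paper: induction on $d=\dim(Q)$, reduction to the open sub-quadrant via \ref{jacobian-algebra}, a local parametrization compatible with the Jacobian minors, and the inductive hypothesis together with Lemma~\ref{lem:parametrization-composition} for the lower-dimensional charts. The only noticeable difference is organizational: the paper proceeds in two passes (first parametrizing compatibly with the minors having all rows in $\{1,\dots,m\}$ to force constant rank~$l$, then, knowing~$l$, parametrizing again compatibly with the $d\times d$ minors $\det(\Pi_\iota(a_1),\dots,\Pi_\iota(a_d))$ with $\iota(l)\le m$ to extract the immersion), whereas you throw in all minors $D_{I,J}$ at once and read off both conclusions from a single parametrization---a harmless streamlining.
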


\begin{proof}
  We prove the result by induction on \(\dim(Q)\), the result being
  obvious when \(\dim(Q) = 0\). Thus, assume that \(d = \dim(Q) >
  0\) and write \(M = \rho(Q)\). By \ref{jacobian-algebra}, up to
  shrinking \(\genpolydisk {m+k} r\), we
  might assume that there are functions \(a_1, \dots,
  a_d \colon \genpolydisk {m+k} r \to \real^m\) such that all of their
  components are in \(\genfunctionalg {m+k} r\) and \(a_1(x), \dots,
  a_d(x)\) is a basis of \(\tangent {\rho(x)} M\) for each \(x \in
  Q\). Both points of the proposition are proven by introducing 
  parametrizations that are compatible with some family of
  determinants involving \(a_1, \dots, a_d\). 

  Firstly, consider a local parametrization \(\{(\rho_j, Q_j) \for j \in J\}\)
  of \(Q\) at \(0\) that is compatible with the functions
  \[\det(\Pi_\iota(a_{\kappa(1)}(x)), \dots,
    \Pi_\iota(a_{\kappa(k)}(x)))\]
  for every integer \(k \geq 0\) and every strictly increasing
  functions \(\iota \colon \{1, \dots, k\} \to \{1, \dots, m\}\) and
  \(\kappa \colon \{1, \dots, k\} \to \{1, \dots, d\}\). By Lemma
  \ref{lem:parametrization-composition}, it suffices to show that the
  result holds for each pair \((\rho \circ \rho_j, Q_j)\). Fix \(j \in
  J\), if
  \(\dim(Q_j) < \dim(Q)\), we are done by induction hypothesis. Thus,
  assume that \(\dim(Q_j) = \dim(Q)\) and let \(M_j = \rho \circ
  \rho_j(Q_j)\). Since \(\dim(M_j) = \dim(M)\), it follows that
  \(M_j\) is an open submanifold of \(M\). Thus, for every \(x \in
  Q_j\), the family \(a_1 \circ \rho_j(x), \dots, a_d \circ
  \rho_j(x)\) is a basis of \(\tangent {\rho \circ \rho_j(x)}
  {M_j}\). Furthermore, for every integer \(k \geq 0\) and every
  increasing sequences \(\iota \colon \{1, \dots, k\} \to \{1, \dots,
  m\}\) and \(\kappa \colon \{1, \dots, k\}\to \{1, \dots, d\}\), the
  function
  \[\det(\Pi_\iota(a_{\kappa(1)} \circ \rho_j(x)), \dots,
    \Pi_\iota(a_{\kappa(k)} \circ \rho_j(x)))\]
  has constant sign. Thus, it follows at once that \(\Pi_m
  \restriction M_j\) has constant rank \(l\) for some integer \(l \geq
  0\). All in all, we have shown that
  it suffices to prove the proposition when the first point is already
  satisfied.

  Thus, assume that the first point holds and consider \(\{(\rho_j,
  Q_j) \for j \in J\}\) a local parametrization of \(Q\) at \(0\) that
  is compatible with the functions
  \[\det(\Pi_\iota(a_1(x)), \dots, \Pi_\iota(a_d(x)))\]
  for every strictly increasing sequence \(\iota \colon \{1, \dots,
  d\} \to \{1, \dots, m+k\}\) such that \(\iota(l) \leq m\). By Lemma
  \ref{lem:parametrization-composition}, it suffices to prove the result
  for each pair \((\rho \circ \rho_j, Q_j)\). Fix \(j \in J\), if
  \(\dim(Q_j) < \dim(Q)\) then the result follows by the induction
  hypothesis. Thus, assume that \(\dim(Q_j) = \dim(Q)\) so that \(M_j
  \coloneq \rho \circ \rho_j(Q_j)\) is an open submanifold of
  \(M\). Then, \(\Pi_m \restriction M_j\) must have constant rank
  \(l\). Let \(x \in Q_j\) and write \(y = \rho_j(x)\).
  Then the family \(\Pi_m(a_1(y)), \dots,
  \Pi_m(a_d(y))\) has rank \(l\) so that, up to changing the order, we
  might assume that \(\Pi_m(a_1(y)), \dots, \Pi_m(a_l(y))\) are
  independent. Thus, there is some strictly increasing sequence
  \(\iota' \colon \{1, \dots, l\} \to \{1, \dots, m\}\) such that
  \(\Pi_{\iota'}(a_1(y)), \dots, \Pi_{\iota'}(a_l(y))\) is a basis of
  \(\real^l\). Since the vectors \(a_1(y), \dots, a_d(y)\) are
  independent, there exists a strictly increasing sequence \(\iota
  \colon \{1, \dots, d\} \to \{1, \dots, m+k\}\) which extends \(\iota'\)
  and such that \(\Pi_\iota(a_1(y)), \dots, \Pi_\iota(a_d(y))\) is a
  basis of \(\real^d\). In particular, \(\iota(l) \leq m\) and
  \[\det(\Pi_\iota(a_1 \circ \rho_j(x)), \dots, \Pi_\iota(a_d \circ \rho_j(x)))
    \neq 0.\]
  Since this is a constant function of \(x\), it follows that
  \(\Pi_\iota \restriction M_j\) has to be an immersion whence the
  result. 
\end{proof}

\begin{remark}
  Notice that the induction on dimension is necessary in the proof
  above. Indeed, reusing the notations of the second paragraph, if
  \(\dim(Q_j) < \dim(Q)\) then we do not know a basis of \(\tangent
  {\rho \circ \rho_j(x)} {M_j}\) for \(x \in Q_j\). In particular, the
  determinants
  \[\det(\Pi_\iota(a_{\kappa(1)} \circ \rho_j(x), \dots, a_{\kappa(k)}
    \circ \rho_j(x)))\]
  have no meaning in this context.
\end{remark}

The following corollary is the result we have been building up to in
this section. As explained earlier, it gives parametrizations for
\(\mathcal{A}\)-basic sets that are ``compatible'' with a certain
projection. We intend to use the corollary in conjunction with the
Fiber Cutting Lemma to be introduced in the next paragraph to obtain
results about the geometry of sub-\(\Lambda\)-sets.

\begin{cor}
  \label{cor:best-parametrization}
  Let \(A \subset \real^{m+k}\) be a \(\mathcal{A}\)-basic set. Then,
  there exists a local parametrization \(\{(\rho_j, Q_j) \for j \in J\}\) of
  \(A\) at \(0\) such that, for every \(j \in J\), we have
  \begin{itemize}
  \item If we write \(M_j = \rho_j(Q_j)\) then \(\Pi_m \restriction
    M_j\) has constant rank \(l\). 
  \item There is a strictly increasing sequence \(\iota \colon \{1,
    \dots, d\} \to \{1, \dots, m+k\}\) such that \(\iota(l) \leq m\)
    and \(\Pi_\iota \restriction M_j\) is an immersion. 
  \end{itemize}
\end{cor}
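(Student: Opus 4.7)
The plan is to assemble the corollary from the three main results of this paragraph, using no new ideas. First, apply Proposition \ref{prp:first-parametrization} to the $\mathcal{A}$-basic set $A \subset \real^{m+k}$ (with no auxiliary functions, i.e.\ $q=0$) to obtain a local parametrization $\{(\tilde{\rho}_i, \tilde{Q}_i) \for i \in I\}$ of $A$ at $0$. By Definition \ref{defn:parametrization}, each $\tilde{\rho}_i \colon \genpolydisk{m+k}{r_i} \to \real^{m+k}$ is an admissible transformation and $\tilde{Q}_i \subset \genpolydisk{m+k}{r_i}$ is a sub-quadrant on which $\tilde{\rho}_i$ restricts to a diffeomorphism onto its image.

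Next, for each fixed $i \in I$, the pair $(\tilde{Q}_i, \tilde{\rho}_i)$ meets exactly the hypotheses of Proposition \ref{prp:parametrization-to-compose}. Applying that proposition, one obtains a local parametrization $\{(\rho_{ij}, Q_{ij}) \for j \in J_i\}$ of $\tilde{Q}_i$ at $0$ such that each image $M_{ij} \coloneq \tilde{\rho}_i \circ \rho_{ij}(Q_{ij})$ has the two required properties: $\Pi_m \restriction M_{ij}$ has constant rank $l = l(i,j)$, and there is a strictly increasing $\iota \colon \{1, \dots, \dim(Q_{ij})\} \to \{1, \dots, m+k\}$ with $\iota(l) \leq m$ for which $\Pi_\iota \restriction M_{ij}$ is an immersion.

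Finally, Lemma \ref{lem:parametrization-composition} applied to the outer parametrization of $A$ and, for each $i$, the inner parametrization of $\tilde{Q}_i$, yields that
\[
\bigl\{ (\tilde{\rho}_i \circ \rho_{ij},\, Q_{ij}) \for i \in I,\ j \in J_i \bigr\}
\]
is a local parametrization of $A$ at $0$. The image of the chart indexed by $(i,j)$ is precisely $M_{ij}$, so both bullet points hold by construction, and we let $J$ be the disjoint union of the $J_i$'s and set $\rho_{(i,j)} = \tilde{\rho}_i \circ \rho_{ij}$, $Q_{(i,j)} = Q_{ij}$.

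There is no real obstacle here: the work has all been done in Propositions \ref{prp:first-parametrization} and \ref{prp:parametrization-to-compose}, and the only thing to check is compatibility of the compositions, which is immediate from the definitions. The mildly delicate point, were one to worry about it, is verifying that applying Proposition \ref{prp:parametrization-to-compose} is legitimate — namely that $\tilde{\rho}_i$ is indeed an admissible transformation defined on a polydisk of the right form and restricting to a diffeomorphism on the sub-quadrant $\tilde{Q}_i$ — but this is built into the conclusion of Proposition \ref{prp:first-parametrization} (and was arranged there by adding reflections so that each admissible transformation induced by a branch is defined on a polydisk of the form $\genpolydisk{m+k}{r_\rho}$).
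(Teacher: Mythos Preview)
Your proposal is correct and follows essentially the same approach as the paper's own proof: apply Proposition \ref{prp:first-parametrization} to parametrize \(A\), then Proposition \ref{prp:parametrization-to-compose} to refine each chart, and finally Lemma \ref{lem:parametrization-composition} to compose. The only difference is notation and the added (harmless) commentary about verifying hypotheses.
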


\begin{proof}
  We may first obtain a local parametrization \(\{(\rho_j, Q_j) \for
  j \in J\}\) of \(A\) at \(0\) by Proposition
  \ref{prp:first-parametrization}. For each \(j \in J\), we consider
  \(\{(\rho_{jk}, Q_{jk}) \for k \in K_j\}\) the local parametrization
  of \(Q\) at \(0\) obtained by Proposition
  \ref{prp:parametrization-to-compose}. Finally, by Lemma
  \ref{lem:parametrization-composition}, the family \(\{(\rho_j \circ \rho_{jk},
  Q_{jk}) \for j \in J, k \in K_j\}\) is a local parametrization of
  \(A\) at \(0\) whence the result. 
\end{proof}

\begin{remark}
  \label{rem:hypotheses-fiber-cutting}
  Consider \(Q \subset \genpolydisk {m+k} r\) a sub-quadrant of
  dimension \(d\) and \(\rho \colon \genpolydisk {m+k} r \to
  \real^{m+k}\) an admissible transformation such that \(\rho
  \restriction Q\) is a diffeomorphism onto its image. Consider also
  \(M = \rho(Q)\) and assume that \(\Pi_m \restriction M\) has
  constant rank \(l < d\) and that there is a strictly increasing
  sequence \(\iota \colon \{1, \dots, d\} \to \{1, \dots, m+k\}\) such
  that \(\Pi_\iota \restriction M\) is an immersion and \(\iota(l)
  \leq m\). Let \(f = \rho_{\iota(l+1)}\) and consider \(x \in Q\). We
  are going to show that \(\grad f(x)\) is not in the orthogonal of
  \(\ker(\Pi_m \circ d_x(\rho))\).

  Indeed, let \(e_1, \dots, e_{d-l}\)
  be a basis of \(\ker(\Pi_m \circ d_x(\rho))\). The family of vectors
  \(\Pi_\iota(\tangent x \rho(e_1)), \dots, \Pi_\iota(\tangent x \rho(e_{d-l}))\) are linearly
  independent and we have \(\Pi_m(\tangent x \rho(e_1)) = \dots =
  \Pi_m(\tangent x \rho(e_{d-l})) = 0\). Thus, if we
  define \(\iota' \colon \{1, \dots, d-l\} \to \{1, \dots, m+k\}\) by
  setting \(\iota'(i) = \iota(l+i)\), the vectors
  \(\Pi_{\iota'}(\tangent x \rho (e_1)),
  \dots, \Pi_{\iota'}(\tangent x \rho (e_{d-l}))\) are independent so that they are a
  basis of \(\real^{d-l}\). In particular, there must be some \(1 \leq i
  \leq d-l\) such that the \(\iota(l+1)\)-th coordinate of \(\tangent
  x \rho(e_i)\) is non-zero. Thus, \(\grad \rho_{\iota(l+1)} \cdot e_i
  \neq 0\) whence the result.

  Combining what we said above with \ref{jacobian-algebra}, it is easy
  to show that the charts obtained by Corollary
  \ref{cor:best-parametrization} verify the assumptions of the Local
  Fiber Cutting Lemma (Lemma \ref{fiber-cutting}). 
\end{remark}

Before ending this paragraph, we prove a few results about the
geometry of \(\Lambda\)-sets and sub-\(\Lambda\)-sets that are easy
consequences of the corollary above. To begin with, if \(A \subset
\real^{m+k}\) is a \(\Lambda\)-set, we can use the compactness of
\(\clos A\) to obtain the following:

\begin{cor}
  \label{global-parametrization}
  Let \(A \subset \real^{m+k}\) be a \(\Lambda\)-set. Then, there are
  \(M_1, \dots, M_p\) some simple sub-\(\Lambda\)-sets that are also
  connected manifolds such that \(A = M_1 \cup \dots \cup M_p\), and, for each
  \(1 \leq i \leq p\), the projection \(\Pi_m \restriction M_i\) has
  constant rank \(l_i\) and there is a strictly increasing sequence
  \(\iota \colon \{1, \dots, d_i\} \to \{1, \dots, m+k\}\), where
  \(d_i = \dim(M_i)\), such that \(\Pi_\iota \restriction M_i\) is an
  immersion and \(\iota(l_i) \leq m\). \qed
\end{cor}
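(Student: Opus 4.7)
The plan is to apply Corollary~\ref{cor:best-parametrization} at each point $a \in \clos{A}$ (after pulling back by a suitable affine sign-flip), use the finiteness clause in Definition~\ref{defn:parametrization} to extract a finite family of charts covering a neighborhood of $a$ in $A$, and then invoke compactness of $\clos{A}$ to obtain a global finite cover of $A$ by manifolds with the desired properties.

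More concretely, I would fix $a \in \clos{A}$ and $\sigma \in \{-1, 1\}^{m+k}$ and use $\mathcal{A}$-semianalyticity to obtain a polyradius $r_{a,\sigma}$ such that $h_{a,\sigma}(A) \cap \genpolydisk{m+k}{r_{a,\sigma}}$ is a finite union of $\mathcal{A}$-basic sets $B_1, \dots, B_N$. Applying Corollary~\ref{cor:best-parametrization} to each $B_i$ yields a local parametrization $\{(\rho_j, Q_j) \for j \in J_i\}$ of $B_i$ at $0$ whose charts satisfy the required rank and immersion conditions for $\Pi_m$. Setting every $U_j$ equal to $\real^{m+k}$ in the second clause of Definition~\ref{defn:parametrization} extracts a finite subset $J_i^0 \subset J_i$ such that $\bigcup_{j \in J_i^0} \rho_j(Q_j)$ is a neighborhood of $0$ in $B_i$. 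Composing each $\rho_j$ with $h_{a,\sigma}^{-1}$ and letting $\sigma$ range over $\{-1, 1\}^{m+k}$ produces finitely many pairs $(\tau, Q)$ whose images jointly cover a neighborhood $V_a$ of $a$ in $A$. Compactness of $\clos{A}$ then yields finitely many points $a_1, \dots, a_q$ with $V_{a_1} \cup \dots \cup V_{a_q} \supset A$, and enumerating the resulting charts gives a finite list $(\tau_i, Q_i)_{i=1}^p$ with $A = \bigcup_i \tau_i(Q_i)$.

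Next I would verify the properties of $M_i \coloneq \tau_i(Q_i)$. Each $Q_i$ is an open sub-quadrant of a polydisk and is therefore homeomorphic to a product of open intervals, hence connected; since $\tau_i \restriction Q_i$ is a diffeomorphism onto its image, $M_i$ is a connected manifold. The rank and immersion conditions pass unchanged from $\rho_j$ to $\tau_i = h_{a,\sigma}^{-1} \circ \rho_j$ because $h_{a,\sigma}^{-1}$ acts coordinate-wise and stabilizes the splitting between the first $m$ and the last $k$ coordinates. To see that $M_i$ is a simple sub-$\Lambda$-set, the components of $\tau_i$ lie in the relevant function algebras since $\tau_i$ is a composition of elementary transformations with the affine map $h_{a,\sigma}^{-1}$; Assumption~\ref{graph-simple} then shows that $\graph{\tau_i}$ is a simple sub-$\Lambda$-set. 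Intersecting with $Q_i \times \real^{m+k}$ (which is simple because $Q_i$ is $\mathcal{A}$-basic, hence simple by the remark following Assumption~\ref{graph-simple}) and projecting injectively onto the target coordinates via Lemma~\ref{simple-lem} then yields that $M_i$ is itself a simple sub-$\Lambda$-set.

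The main obstacle I foresee is the bookkeeping required to combine the finiteness clause of Definition~\ref{defn:parametrization} with the various sign-choice quadrants $h_{a,\sigma}^{-1}(\genpolydisk{m+k}{r_{a,\sigma}})$ as $a$ and $\sigma$ vary; in particular, one must be careful that the subcovers extracted for each sign choice really do assemble into a neighborhood of $a$ in $A$, rather than merely covering each quadrant separately. Once this is handled, the remaining steps follow from direct applications of Corollary~\ref{cor:best-parametrization}, Lemma~\ref{simple-lem}, and Assumption~\ref{graph-simple}.
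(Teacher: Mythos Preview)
Your proposal is correct and follows exactly the approach the paper indicates, namely combining Corollary~\ref{cor:best-parametrization} with compactness of \(\clos A\). One minor slip: the sub-quadrants \(Q_i\) arising from the parametrization need not be \emph{open} sub-quadrants (some coordinates may be fixed at \(0\)), but they are still connected manifolds, so your argument that each \(M_i\) is a connected manifold goes through unchanged.
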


\begin{remark}
  Notice that, as an immediate consequence of the corollary,
  \(\Lambda\)-sets have finitely many connected components. Since
  sub-\(\Lambda\)-sets are continuous images of \(\Lambda\)-sets, they
  too have only finitely many connected components. 
\end{remark}

Since the image of a manifold by a function with constant rank has
dimension, we can use the corollary above to show that
sub-\(\Lambda\)-sets have dimension. The next lemma will allow
us to compute that dimension in some key cases. It shows in particular
that, if \(A \subset \real^{m+k}\) is a \(\Lambda\)-set and \(\Pi_m
\restriction A\) has finite fibers, then \(\dim(\Pi_m(A)) = \dim(A)\)
which is useful when considering simple sub-\(\Lambda\)-sets.  

\begin{lem}
  \label{dimension-fibers}
  Consider \(A \subset \real^{m+k}\) a simple sub-\(\Lambda\)-set and
  assume that there is an
  integer \(\mu \geq 0\) such that \(\dim A_y = \mu\) for all \(y \in
  \Pi_m(A)\). Then \(\dim (A) = \dim (\Pi_m(A)) + \mu\). 
\end{lem}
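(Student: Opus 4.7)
The plan is to reduce to a \(\Lambda\)-set, apply Corollary \ref{global-parametrization} to get a nice decomposition, and then exploit the uniform fiber dimension hypothesis.

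First, I would write \(A = \Pi_{m+k}(A')\) where \(A' \subset \real^{m+k+l}\) is a \(\Lambda\)-set such that \(\Pi_{m+k} \restriction A'\) has finite fibers, and apply Corollary \ref{global-parametrization} to \(A'\), viewing the ambient space as \(\real^m \times \real^{k+l}\). This yields a decomposition \(A' = M'_1 \cup \dots \cup M'_p\) where each \(M'_i\) is a connected manifold of dimension \(d_i\) on which \(\Pi_m\) has constant rank \(l_i\). The restriction \(\Pi_{m+k} \restriction M'_i\) inherits finite fibers, and a rank-stratification argument shows that the maximum rank of \(d\Pi_{m+k}\) on \(M'_i\) must equal \(d_i\)—otherwise, on the open subset where the maximum is attained, the constant-rank form would produce positive-dimensional fibers. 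Thus there is an open subset \(M''_i \subset M'_i\) of dimension \(d_i\) on which \(\Pi_{m+k}\) is an immersion, so by the third recalled dimension fact \(\dim \Pi_{m+k}(M'_i) = d_i\), giving \(\dim A = D\) with \(D \coloneq \max_i d_i\). Similarly, \(\dim \Pi_m(A) = \dim \Pi_m(A') = L\) with \(L \coloneq \max_i l_i\).

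Next, I would compute the fibers. For \(y \in \Pi_m(A)\), let \(I(y) = \{i \for y \in \Pi_m(M'_i)\}\). For \(i \in I(y)\), the fiber \((M'_i)_y \subset \real^{k+l}\) is a manifold of dimension \(d_i - l_i\), and the projection \(\pi \colon \real^{k+l} \to \real^k\) onto the first \(k\) coordinates, restricted to \((M'_i)_y\), again has finite fibers, so \(\dim \pi((M'_i)_y) = d_i - l_i\). Since \(A_y = \bigcup_{i \in I(y)} \pi((M'_i)_y)\), the countable union fact gives
\[\mu = \dim A_y = \max_{i \in I(y)} (d_i - l_i).\]

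For the upper bound \(D \leq L + \mu\): pick \(i\) with \(d_i = D\) and any \(y \in \Pi_m(M'_i)\); then \(D - l_i \leq \mu\), so \(D \leq l_i + \mu \leq L + \mu\). For the lower bound \(D \geq L + \mu\): the set \(\bigcup_{l_j < L} \Pi_m(M'_j)\) has dimension strictly less than \(L\), while \(\bigcup_{l_i = L} \Pi_m(M'_i)\) has dimension \(L\), so there exists \(y \in \Pi_m(A)\) lying only in sets \(\Pi_m(M'_i)\) with \(l_i = L\). For such a \(y\), the fiber formula produces some \(i \in I(y)\) with \(l_i = L\) and \(d_i - L = \mu\), hence \(D \geq L + \mu\). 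The main obstacle is this lower bound: it requires selecting a generic \(y \in \Pi_m(A)\) over which only the maximal-rank strata \(M'_i\) (those with \(l_i = L\)) pass, forcing the fiber equation to be realized by a top-dimensional stratum.
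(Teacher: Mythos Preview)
Your argument has the right shape but contains a genuine gap, and it also diverges from the paper's route in a way worth noting.

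The gap lies in the repeated claim that a \(C^1\) map with finite fibers preserves dimension. You correctly argue that if \(\Pi_{m+k}\restriction M'_i\) has finite fibers then its differential attains full rank \(d_i\) on some open \(M''_i\subset M'_i\), whence \(\dim\Pi_{m+k}(M''_i)=d_i\) by the constant-rank fact. But that only yields \(\dim\Pi_{m+k}(M'_i)\geq d_i\); the third recalled fact says nothing about \(M'_i\setminus M''_i\), where the rank may vary, so you have not established \(\dim\Pi_{m+k}(M'_i)\leq d_i\) and hence not \(\dim A\leq D\). The same issue recurs for \(\dim\pi((M'_i)_y)=d_i-l_i\), and there it is not cosmetic: your lower bound \(D\geq L+\mu\) needs \(\mu\leq\max_{i\in I(y)}(d_i-l_i)\), which in turn needs \(\dim\pi((M'_i)_y)\leq d_i-l_i\). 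These upper bounds are true (locally Lipschitz maps do not raise Hausdorff dimension), but they are not consequences of the three dimension facts the paper records, so as written the argument is incomplete.

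The paper sidesteps this entirely by a two-stage proof. It first treats the case where \(A\) is itself a \(\Lambda\)-set: applying Corollary~\ref{global-parametrization} to \(A\) gives \(A=\bigcup M_i\) with \(\Pi_m\restriction M_i\) of constant rank \(l_i\), so each fiber \((M_i)_y\) is genuinely a manifold of dimension \(d_i-l_i\) and no rank-stratification is needed. The lower bound is also obtained differently: rather than choosing a generic \(y\), one observes that every \(y\in\Pi_m(A)\) lies in some \(\Pi_m(M_i)\) with \(d_i-l_i=\mu\), so \(\Pi_m(A)=\bigcup_{\mu_i=\mu}\Pi_m(M_i)\) and one of these pieces already realises \(l_i=\dim\Pi_m(A)\). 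The simple sub-\(\Lambda\)-case is then reduced to three applications of the \(\Lambda\)-case: once (with \(\mu=0\)) to get \(\dim A'=\dim A\), once on each fiber to get \(\dim(A'_y)=\mu\), and once to \(A'\) under \(\Pi_m\). Your direct route becomes correct if you patch the gap --- for instance by applying Corollary~\ref{global-parametrization} a second time, to each \(M'_i\) with respect to \(\Pi_{m+k}\), so that this projection too has constant rank on the pieces --- but the paper's bootstrap is cleaner precisely because it never has to control a projection that is not already of constant rank.
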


\begin{proof}
  We are first going to prove the result when \(A\) is a
  \(\Lambda\)-set. To this end, consider \(M_1, \dots, M_p\) as in
  Corollary \ref{global-parametrization}. Let \(d_i = \dim(M_i)\) and
  \(l_i\) be the constant rank of \(\Pi_m \restriction M_i\). If we
  define \(\mu_i = d_i - l_i\), then \(\dim((M_i)_y) = \mu_i\) for each
  \(y \in \Pi_m(M_i)\). Since \((M_i)_y \subset A_y\),
  it follows that \(\mu_i \leq \mu\). Furthermore, \(\dim(\Pi_m(M_i)) =
  l_i\) so that \(l_i \leq \dim((\Pi_m(A))\). Thus, \(d_i \leq
  \dim(\Pi_m(A)) + \mu\). But \(\dim(A) = \max(d_1, \dots, d_p)\) so
  that \(\dim(A) \leq \dim(\Pi_m(A)) + \mu\).

  Now, we prove the opposite inequality. To this end, consider \(y \in
  \Pi_m(A)\), we have \(A_y = (M_1)_y \cup \dots \cup (M_p)_y\). Since
  \(\dim(A_y) = \mu\), there is some \(1 \leq i \leq p\) such that
  \(\dim((M_i)_y) = \mu\). But then, \(\mu_i = \mu\) and \(y \in
  \Pi_m(M_i)\) so that we obtain
  \[\Pi_m(A) = \bigcup_{\mu_i = \mu} \Pi_m(M_i).\]
  Thus, there is some \(1 \leq i \leq p\) such that \(\mu_i = \mu\) and
  such that \(l_i = \dim(\Pi_m(M_i)) = \dim(\Pi_m(A))\). We now have
  \(\dim(A) \geq \dim(M_i) = l_i+\mu_i = \dim(\Pi_m(A)) + \mu\) whence the
  result when \(A\) is a \(\Lambda\)-set. 
  
  Assume next that \(A\) is a simple sub-\(\Lambda\)-set and consider
  an integer \(k \geq 0\) and a \(\Lambda\)-set \(A' \subset
  \real^{m+k+h}\) such that \(\Pi_{m+k}(A') = A\) and \(\Pi_{m+k}
  \restriction A'\) has finite fibers. In particular, we may apply
  the lemma to \(A'\) to obtain that \(\dim(A') =
  \dim(A)\). Furthermore, \(\Pi_m(A) = \Pi_m(A')\) and, for every \(y
  \in \Pi_m(A')\), we have \(\Pi_{m+k}(A'_y) = A_y\). Since
  \(A'_y\) is a \(\Lambda\)-set and \(\Pi_{m+k} \restriction A'_y\)
  has finite fibers, we conclude that \(\dim(A'_y) = \dim(A_y) = \mu\)
  by the lemma. Finally, applying the lemma again, we find that
  \(\dim(A) = \dim(A') = \dim(\Pi_m(A')) + \mu = \dim(\Pi_m(A)) + \mu\)
  which concludes the proof. 
\end{proof}

\subsection{The Fiber Cutting Lemmas}
\label{par:fiber-cutting}

For every sub-\(\Lambda\)-set \(A \subset \real^n\), there exist some
integer \(k \geq 0\) and some \(\Lambda\)-set \(A' \subset
\real^{n+k}\) such that \(\Pi_{n+k}(A') = A\). However, it is possible
that \(\dim A' > \dim A\). Thanks to the Global Fiber Cutting
Lemma (stated as Corollary \ref{global-fiber-cutting} below), we are
able to replace \(A'\) with \(\Lambda\)-sets of lower dimension
whenever this is the case. It is then rather easy to deduce Theorem
\ref{sub-sets-are-simple} according to which all sub-\(\Lambda\)-sets
are simple. We first prove a local version of the Fiber Cutting Lemma,
from which the global version follows by compactness of
parametrizations.

\begin{lem}[Local Fiber Cutting Lemma]
  \label{fiber-cutting}
  Let \(Q \subset \genpolydisk d r\) be a sub-quadrant of \(\genpolydisk d
  r\). Consider also \(\eta \colon \genpolydisk d r \to \real^m\) a
  \(C^1\)-map such that \(\eta \restriction Q\) has constant rank \(l
  < d\) and \(\eta_i \in
  \genfunctionalg d r\) for each \(1 \leq i \leq m\) where \(\eta =
  (\eta_1, \dots, \eta_m)\). Assume also that
  \begin{itemize}
  \item There are functions \(b_1, \dots, b_{d-l} \colon \genpolydisk d r
    \to \real^{m+k}\) such that all of their components are in
    \(\genfunctionalg d r\) and, for
    every \(x \in Q\), \(b_1(x), \dots, b_{d-l}(x)\) is a
    basis of \(\ker(\tangent x \eta)\). 
  \item There is a function \(f \in \genfunctionalg d r\) such that,
    for all \(x \in Q\),
    \(\grad f(x)\) is not in the orthogonal of \(\ker(\tangent x
    \eta)\). 
  \end{itemize}
  Then, there exists a simple sub-\(\Lambda\)-set \(A \subset Q\) and a
  neighborhood \(U \subset \real^d\) of \(0\) such that \(\eta(Q \cap
  U) = \eta(A)\) and \(\dim(A) < d\). 
\end{lem}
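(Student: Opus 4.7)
The plan is to cut the fibers of $\eta \restriction Q$ using the monotonicity of $f$ along a field $v$ tangent to them. Since $\grad f(x) \notin (\ker \tangent x \eta)^\perp$, I would set
\[
v(x) \coloneq \sum_{j=1}^{d-l} (\grad f(x) \cdot b_j(x)) \, b_j(x).
\]
Then $v(x) \in \ker \tangent x \eta$ on all of $Q$, and $\grad f \cdot v = \sum_{j} (\grad f \cdot b_j)^2$, which is strictly positive on $Q$ by hypothesis. Hence $v$ is tangent to every fiber $F_y \coloneq (\eta \restriction Q)^{-1}(y)$, does not vanish on $Q$, and $f$ is strictly increasing along its integral curves; in particular $f \restriction F_y$ has no critical points.

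I would then pick a polyradius $r'$ with $0 < r'_i < r_i$ for each $i$, set $U \coloneq \genpolydisk d {r'}$, and define $A$ as the union of the codimension-one faces of $\overline Q \cap \overline U$:
\[
A \coloneq \bigcup_{i=1}^d \bigl( \{x \in \overline Q \cap \overline U \for x_i = r'_i\} \cup \{x \in \overline Q \cap \overline U \for x_i = 0\} \bigr).
\]
Each piece is defined by a coordinate equality together with the coordinate inequalities of $Q$ and $\overline U$, so it is $\mathcal A$-basic and bounded. By Assumption \ref{graph-simple} and the remark following it, each piece is a simple sub-$\Lambda$-set, and hence so is $A$ by Lemma \ref{simple-lem}. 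Since each piece fixes a coordinate, $\dim A \leq d - 1 < d$.

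For the key inclusion $\eta(Q \cap U) \subset \eta(A)$, given $y \in \eta(Q \cap U)$ and a preimage $x_0 \in Q \cap U$, I would flow $x_0$ along $v$. The trajectory stays in $F_y$, is well-defined as long as it remains in $Q \cap U$, and $f$ is strictly monotonic along it. By compactness of $\overline Q \cap \overline U$ together with the absence of critical points of $f \restriction F_y$, the trajectory must reach $\partial(\overline Q \cap \overline U)$, at which point some coordinate is $0$ or $r'_i$; the exit point therefore lies in $A$ and still has $\eta$-image $y$. The reverse inclusion $\eta(A) \subset \eta(Q \cap U)$ may be enforced, if needed, by replacing $A$ by $A \cap \eta^{-1}(\eta(Q \cap U))$, which preserves both the simple sub-$\Lambda$-set property and the dimension bound.

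The main obstacle is the rigorous analysis of the flow of $v$: one must argue that the integral curve reaches $\partial(\overline Q \cap \overline U)$ in finite time, which follows from compactness and strict monotonicity of $f$ along $v$ (a bounded trajectory interior to $Q \cap U$ would have a limit point, and at that limit $v \ne 0$ on $Q$ would force $\grad f \cdot v > 0$ persisting in the limit, contradicting $f$ being bounded along the trajectory). A minor subtlety concerns the convention ``$A \subset Q$''; the intended reading is $A \subset \overline Q \cap \genpolydisk d r$, so that the boundary faces $\{x_i = 0\} \cap \overline Q$ may be included in $A$, which is immaterial to the subsequent use of the lemma in the Global Fiber Cutting Lemma.
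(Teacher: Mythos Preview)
Your flow-to-the-boundary idea does establish $\eta(Q \cap U) \subset \eta(A)$, but both the inclusion $A \subset Q$ and the reverse inclusion $\eta(A) \subset \eta(Q \cap U)$ fail in ways that cannot be repaired as you suggest.

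After the reduction to $Q$ open, the inner faces $\{x_i = 0\}$ of your box lie genuinely outside $Q$, and nothing prevents a fiber of $\eta$ from accumulating on $\{x_i = 0\}$, so the trajectory may exit there. This is not a matter of convention: the downstream arguments require $C_j \subset Q_j$ so that $\rho_j(C_j) \subset B$, which yields $\Pi_{m+k}(A_i) \subset B$ in Corollary~\ref{global-fiber-cutting} and hence the fiberwise bound $\dim((A_i)_y) \leq \dim(B_y)$ of Remark~\ref{rem:fibers-global-fiber-cutting}. Without that bound the induction in Theorem~\ref{sub-sets-are-simple} collapses.

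Your fix for the reverse inclusion is circular. The set $A \cap \eta^{-1}(\eta(Q\cap U))$ is the projection onto the first factor of
\[
\{(a,x) \in A \times (Q\cap U) : \eta(a) = \eta(x)\},
\]
and the fibers of that projection are exactly the $(d-l)$-dimensional fibers of $\eta\restriction Q\cap U$, with $d-l>0$. So this intersection is only known to be a sub-$\Lambda$-set, not a \emph{simple} one, and showing that every sub-$\Lambda$-set is simple is precisely the goal of the entire section. The paper avoids both problems at once by maximizing the barrier $\Phi(x,r') = \prod_i x_i(r'_i - x_i)$ over connected components of fibers: since $\Phi>0$ on $Q\cap\genpolydisk d{r'}$ and vanishes on its whole frontier, the critical set $\widetilde A_{r'}$ sits inside $Q\cap\genpolydisk d{r'}$, giving $A\subset Q$ and $\eta(A)\subset\eta(Q\cap U)$ for free. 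Your function $f$ enters only at the very end, together with quasianalyticity (which your argument never invokes), to show that \emph{some} $r'$ satisfies $\dim\widetilde A_{r'}<d$.
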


\begin{remark}
  The function \(f\) whose existence is assumed will only be useful at
  the end of the proof. It serves as a way to show that the connected
  components of the fibers of \(\eta \restriction Q\) must have
  non-empty frontier following the discussion before Lemma 4.5 in
  \cite{rolin-speissegger-wilkie-denjoy-carleman-classes}. 
\end{remark}

\begin{proof}
  Notice first that it suffices to prove the result when \(Q\) is an
  open sub-quadrant of \(\genpolydisk d r\). Then, define
  \[\widetilde Q = \{(x, r') \in \real^d \times \real^d \for 0 < r'_i < r_i
    \text{ for all } 1 \leq i \leq d \text{ and } x \in Q \cap
    \genpolydisk d {r'}\}\]
  and consider the function \(\Phi \colon \real^d \times \real^d \to
  \real\) defined by
  \[\Phi(x, r') = \prod_{i=1}^d x_i(r'_i - x_i).\]
  Let also
  \[\widetilde A = \{(x, r') \in \widetilde Q \for \grad_x \Phi(x, r') \cdot b_1(x) = \dots
    = \grad_x \Phi(x, r') \cdot b_{d-l}(x) = 0\}.\]
  Now, fix some \(r'\) such that \(0 < r'_i < r_i\) for every \(1 \leq
  i \leq d\). The first step is to show that \(\eta(\widetilde A_{r'})
  = \eta(Q \cap \genpolydisk d {r'})\). To do this, we proceed as in
  \cite[Lemma 3.11]{rolin-servi-monomialization}. Thus
  let \(y \in \eta(Q \cap \genpolydisk d {r'})\) and consider
  \(C\) a connected component of the fiber of \(\eta \restriction Q
  \cap \genpolydisk d {r'}\) above \(y\). 
  Since \(\clos C\) is compact, there is \(x_0 \in \clos C\) such that
  \(\Phi(x_0, r')\) is
  maximal. Given that \(x \mapsto \Phi(x, r')\) is stricly positive on \(Q
  \cap \genpolydisk d {r'}\) and vanishes on its frontier, it follows that
  \(x_0 \in C\). Thus, the restriction of \(x \mapsto \Phi(x, r')\) to
  \(C\) is critical at \(x_0\) so that \(x_0 \in \widetilde A_{r'}\). This
  shows in particular that \(\eta(\widetilde A_{r'}) = \eta(Q \cap
  \genpolydisk d {r'})\) as announced above. Furthermore, \(\widetilde
  A_{r'}\) is a simple
  sub-\(\Lambda\)-set so that we might pick \(A = \Pi_d(\widetilde A_{r'})\)
  whenever \(\dim (\widetilde A_{r'}) < d\).

  In \cite[Lemma 3.11]{rolin-servi-monomialization}, the authors show
  that we must have \(\dim(\widetilde A_{r'}) < d\) for every
  \(r'\). However, this involves using \(\mathcal{A}\)-analyticity
  which we have not assumed. Thus, we will instead attempt to show
  that there is some \(r'\) such that \(\dim(\widetilde A_{r'}) <
  d\). 
  \begin{figure}
    \begin{tikzpicture}[scale = 0.6]
      \draw[step=1cm,gray,very thin,opacity=0.5] (-4.9,-4.9) grid
      (4.9,4.9);
      \draw[thick,->] (-4.5,0) -- (4.5,0) node[anchor =
      north east]{\(r'\)};
      \draw[thick,->] (0,-4.5) -- (0,4.5)
      node[anchor = north east]{\(x\)}; \draw (-4, -4) -- (4, 4);
      \draw (-4, 4) -- (4, -4);
      \fill [orange!60!white, opacity = 0.3] (-4, 4) -- (-4, -4) --
      (0, 0) -- cycle; \fill [orange!60!white, opacity = 0.3] (4, 4)
      -- (4, -4) -- (0, 0) -- cycle;
    \end{tikzpicture}
    \hspace{20pt}
    \begin{tikzpicture}[scale=0.6]
      \draw[step=1cm,gray,very thin,opacity=0.5] (-4.9,-4.9) grid
      (4.9,4.9);
      \draw[thick,->] (-4.5,0) -- (4.5,0) node[anchor = north east]{\(r'\)};
      \draw[thick,->] (0,-4.5) -- (0,4.5) node[anchor = north east]{\(x\)};
      \draw (-4, -4) -- (4, 4);
      \draw (-4, 4) -- (4,-4);
      \draw (-3, -2) rectangle (3, 2) node[anchor=north west]
      {\(\genpolydisk {2d} s\)};
      \fill [green!60!white, opacity = 0.3] (-2, -2) rectangle (2, 2);
    \end{tikzpicture}
    \caption{}
    \label{fig:fiber-cutting}
  \end{figure}
  The argument will rely on an ability to restrict to arbitrary
  neighborhoods of \(0\) in \(\widetilde Q\). More precisely, given a
  polydisk \(\genpolydisk {2d} s\), we have \(\widetilde A_{r'}
  \subset \genpolydisk {2d} s\) for every sufficiently small
  \(r'\). This is illustrated in the case \(d=1\), \(r=4\) and \(s =
  (2, 3)\) in
  Figure \ref{fig:fiber-cutting}. Indeed, the set \(\widetilde Q\) is
  included in the orange area in the leftmost drawing whence the
  fibers in the green area in the rightmost drawing are fully
  contained in \(\genpolydisk {2d} s\). 
  
  Thus, assume by contradiction that \(\dim(\widetilde A_{r'}) = d\) for every
  \(r'\) such that \(0 < r'_i < r_i\) for each \(1 \leq i \leq
  d\) and let \(B\) be the interior of \(\widetilde A\).
  We are going to show that \(0 \in \clos B\). 
  To this end, consider \(\genpolydisk {2d} s \subset \real^{2d}\) a
  polydisk. For every sufficiently small \(r'\), we have \(\widetilde A_{r'}
  \subset \genpolydisk {2d} s\) so that, by Lemma \ref{dimension-fibers}, it follows that
  \(\dim(\widetilde A\cap \genpolydisk {2d} s) = 2d\). Thus, \(\widetilde A\cap
  \genpolydisk {2d} s\)
  has non-empty interior so that \(B \cap \genpolydisk {2d} s \neq
  \varnothing\). This shows that \(0 \in \clos B\) as we wanted. Consider a
  parametrization \(\{(\rho_j, Q_j) \for j \in J\}\) of \(\genpolydisk d
  r \times \genpolydisk d r\) that is compatible with the functions
  \(a_1, \dots, a_{d-l} \in \genfunctionalg {2d} {(r, r)}\), where
  \[a_i(x, r') = \grad_x \Phi(x, r') \cdot b_i(x),\]
  and let \(J_0 \subset J\) be a finite subset such that
  \(\bigcup_{j \in J_0} \rho_j(Q_j)\) is a neighborhood of \(0\) in
  \(\genpolydisk d r \times \genpolydisk d r\). Since \(B\) is an open set
  and \(0 \in \clos B\), there has to be some \(j \in J_0\) such that
  \(\dim(B \cap \rho_j(Q_j)) = 2d\). It follows that \(Q_j\) is an
  open sub-quadrant and that \(a_i \circ \rho_j \restriction Q_j
  \equiv 0\) for each \(1 \leq i \leq d-l\). Thus, for \(1 \leq i
  \leq d-l\), we have \(\taylor (a_i \circ \rho_j) = 0\) so that
  \(\taylor (a_i) = 0\). But then, by quasi-analyticity, we deduce
  that there is a polydisk \(\genpolydisk {2d} s \subset \real^{2d}\) such
  that \(a_1, \dots, a_{d-l}\) vanish on \(\genpolydisk {2d} s\).

  Now, for
  every sufficiently small \(r'\), we have \(\widetilde Q_{r'}\subset
  \genpolydisk {2d}
  s\). Consider such a \(r'\), some \(y \in \eta(Q \cap
  \genpolydisk d {r'})\) and let \(C\) be a connected component of the
  fiber of \(\eta \restriction Q \cap \genpolydisk d {r'}\) above
  \(y\).
  Then, \(\Phi(x, r')\) is constant and
  strictly positive on \(C\) and it vanishes on the frontier of
  \(C\). This implies that \(C\) has empty frontier whence it is
  compact. Thus, there is some \(x_0 \in C\) such that \(f(x_0)\) is
  maximal. But, by hypothesis, there is some \(1 \leq i \leq d-l\)
  such that \(\grad f(x_0) \cdot b_i(x_0) \neq 0\) which contradicts
  the maximality. 
\end{proof}

\begin{remark}
  Reusing the notation of the proof above, the function \(x \mapsto
  \Phi(x, r')\) for some fixed \(r'\) is simpler than the function
  \(g\) in \cite[Lemma 3.11]{rolin-servi-monomialization} since there
  are no functions \(g_1, \dots, g_q\). This is a consequence of our
  approach towards refining the parametrization. Indeed, the functions
  \(g_1, \dots, g_q\) in \cite{rolin-servi-monomialization} are used
  to define an open submanifold. Since we have decided to parametrize
  further instead of taking submanifolds, these functions do not
  appear. 
  
  Assume that \(Q \subset \genpolydisk {m+k} r\) is a sub-quadrant and
  that \(\rho \colon \genpolydisk {m+k} r \to \real^{m+k}\) is an
  admissible transformation such that \(\rho \restriction Q\) is a
  diffeomorphism onto \(M \coloneq \rho(Q)\). Assume also that \(\Pi_m
  \restriction M\) has constant rank \(l < d\) and that there is a
  strictly increasing sequence \(\iota \colon \{1, \dots, d\} \to \{1,
  \dots, m+k\}\) such that \(\iota(l) \leq m\) and \(\Pi_\iota
  \restriction M\) is an immersion. Then, by \ref{jacobian-algebra}
  and by Remark \ref{rem:hypotheses-fiber-cutting}, it
  follows that the function \(\eta \coloneq \Pi_m \circ \rho\)
  satisfies the hypotheses of the lemma. Thus, reusing the notations
  of Corollary \ref{cor:best-parametrization},
  for each \(j \in J\), either we can apply the Local Fiber Cutting
  Lemma to \(\eta_j \coloneq \Pi_m \circ \rho_j\) over the
  sub-quadrant \(Q_j\) or the projection \(\Pi_m \restriction M_j\)
  has finite fibers. 
\end{remark}

By the remark above, we can apply the Local Fiber Cutting Lemma to the
charts obtained thanks to Corollary
\ref{cor:best-parametrization}. The proposition below is obtained by
doing exactly that. 

\begin{prp}
  Consider \(B \subset \real^{m+k}\) a \(\mathcal{A}\)-basic
  set such that \(\dim(\Pi_m(B)) < \dim(B)\). Then, there are \(A_1,
  \dots, A_p\) some \(\Lambda\)-sets such that
  \begin{itemize}
  \item For every \(1 \leq i \leq p\), there is some integer \(h_i
    \geq 0\) such that \(A_i \subset \real^{m+k+h_i}\), \(\dim(A_i)
    < \dim(B)\), \(\Pi_{m+k}(A_i) \subset B\) and \(\Pi_{m+k}
    \restriction A_i\) has finite fibers.
  \item There is a neighborhood \(U \subset \real^{m+k}\) of \(0\)
    such that
    \[\Pi_m(B \cap U) = \Pi_m(A_1) \cup \dots \cup \Pi_m(A_p).\]
  \end{itemize}
\end{prp}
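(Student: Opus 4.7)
The plan is to apply Corollary \ref{cor:best-parametrization} to $B$ in order to reduce to a chart-by-chart analysis, then either invoke the Local Fiber Cutting Lemma or a direct dimension argument on each chart, and finally use the compactness clause of local parametrizations to extract a finite family of charts.

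I would start by writing $\{(\rho_j, Q_j) \for j \in J\}$ for the local parametrization of $B$ at $0$ produced by Corollary \ref{cor:best-parametrization}, setting $M_j = \rho_j(Q_j)$, $d_j = \dim Q_j$, and letting $l_j$ denote the constant rank of $\Pi_m \restriction M_j$. Since $\Pi_m(M_j) \subset \Pi_m(B)$ and $\dim \Pi_m(M_j) = l_j$, the hypothesis $\dim \Pi_m(B) < \dim B$ immediately gives $l_j < \dim B$ for every $j \in J$. For each $j$ I would then split into two cases. If $l_j = d_j$, then $\dim M_j = l_j < \dim B$ and $\Pi_m \restriction M_j$ has discrete fibers, so I would pick any small polydisk neighborhood $V_j$ of $0$ in $\real^{d_j}$ and let $\tilde A^{(j)} = Q_j \cap V_j$, which is bounded semialgebraic and hence a $\Lambda$-set (no lifting is needed and the dummy variable $z$ below is absent). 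Otherwise $l_j < d_j$, in which case Remark \ref{rem:hypotheses-fiber-cutting} ensures that the hypotheses of the Local Fiber Cutting Lemma (Lemma \ref{fiber-cutting}) are met for $\eta_j := \Pi_m \circ \rho_j$ on $Q_j$; applying it produces a simple sub-$\Lambda$-set $A^{(j)} \subset Q_j$ of dimension strictly less than $d_j$ together with a neighborhood $V_j \subset \real^{d_j}$ of $0$ such that $\eta_j(Q_j \cap V_j) = \eta_j(A^{(j)})$, and by definition of simple sub-$\Lambda$-set I would lift $A^{(j)}$ to a $\Lambda$-set $\tilde A^{(j)}$ that projects with finite fibers onto $A^{(j)}$ and has the same dimension.

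To push this data back to the ambient space, I would set
\[A_{i(j)} := \{(y, x, z) \for y = \rho_j(x),\ (x, z) \in \tilde A^{(j)}\}.\]
Since admissible transformations are polynomial, the graph of $\rho_j$ on its bounded polydisk domain is semialgebraic, so $A_{i(j)}$ is an intersection of two $\Lambda$-sets and is itself a $\Lambda$-set. The injectivity of $\rho_j$ on $Q_j$ combined with the finite-fiber property of the projection $\tilde A^{(j)} \to A^{(j)}$ forces $\Pi_{m+k} \restriction A_{i(j)}$ to have finite fibers, while direct checks give $\Pi_{m+k}(A_{i(j)}) = \rho_j(A^{(j)}) \subset B$, $\dim A_{i(j)} < \dim B$, and $\Pi_m(A_{i(j)}) = \eta_j(A^{(j)}) = \eta_j(Q_j \cap V_j)$.

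Finally, I would invoke the compactness clause of Definition \ref{defn:parametrization} applied to the family $\{V_j\}$, producing a finite subset $J_0 \subset J$ for which $\bigcup_{j \in J_0} \rho_j(Q_j \cap V_j)$ is a neighborhood of $0$ in $B$; writing this as $B \cap U$ for an open $U \subset \real^{m+k}$ and re-indexing the corresponding $A_{i(j)}$'s as $A_1, \dots, A_p$ yields the desired family. The main obstacle I expect is the bookkeeping around the claim that each $A_{i(j)}$ is a genuine $\Lambda$-set and not merely a simple sub-$\Lambda$-set; this hinges on the polynomial nature of admissible transformations, which guarantees that their graphs on bounded sub-quadrants are semialgebraic and thus of the right class to intersect with the $\Lambda$-set lift $\tilde A^{(j)}$.
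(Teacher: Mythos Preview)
Your overall strategy matches the paper's: parametrize \(B\) via Corollary \ref{cor:best-parametrization}, apply the Local Fiber Cutting Lemma chart by chart, then invoke the compactness clause of Definition \ref{defn:parametrization} to extract finitely many charts. Your case split (\(l_j = d_j\) versus \(l_j < d_j\)) differs cosmetically from the paper's (\(\dim Q_j < \dim B\) versus \(\dim Q_j = \dim B\)), but either works; you simply apply Fiber Cutting in a few cases where the paper does not bother.

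There is, however, a genuine error in the bookkeeping step you flag as the main obstacle. You assert that admissible transformations are polynomial, so that the graph of \(\rho_j\) over its bounded polydisk is semialgebraic and hence a \(\Lambda\)-set. This is false in the generalized-series setting of the paper: the elementary transformations of \cite[Definition 2.1]{rolin-servi-monomialization} include ramifications \(x_i \mapsto x_i^\gamma\) and blow-up charts carrying exponents in the semi-ring \(\mathbb{A}\), which need not be integers or even rational. Consequently \(\rho_j\) is not polynomial in general, its graph is not semialgebraic, and your set \(A_{i(j)}\) is not a \(\Lambda\)-set on the nose.

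The repair is easy and is precisely what the paper does. The components of \(\rho_j\) belong to \(\genfunctionalg{m+k}{r_j}\), so by Assumption \ref{graph-simple} (applied componentwise) together with Lemma \ref{simple-lem} the graph of \(\rho_j\) is a \emph{simple} sub-\(\Lambda\)-set. Intersecting with \(\real^{m+k} \times A^{(j)}\) and projecting onto the first factor (finite fibers, since \(\rho_j \restriction Q_j\) is injective) shows that \(\rho_j(A^{(j)}) \subset B\) is again a simple sub-\(\Lambda\)-set. Now invoke the \emph{definition} of simple sub-\(\Lambda\)-set to lift \(\rho_j(A^{(j)})\) to a genuine \(\Lambda\)-set \(A_j \subset \real^{m+k+h_j}\) with \(\Pi_{m+k} \restriction A_j\) having finite fibers; Lemma \ref{dimension-fibers} then yields \(\dim A_j = \dim \rho_j(A^{(j)}) < \dim B\). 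With this correction, the rest of your argument goes through unchanged.
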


\begin{proof}
  As said above, the proof proceeds by first parametrizing \(\mathcal{A}\)-basic sets
  using Corollary \ref{cor:best-parametrization} and then applying the Local
  Fiber Cutting Lemma to each of the charts we obtain. The following diagram
  illustrates the situation.

  \begin{center}
    \begin{tikzcd}[column sep = large, row sep = large]
      C_{j} \arrow[phantom, sloped]{d}{\subset} &\dots &C_{j'}
      \arrow[phantom, sloped]{d}{\subset}\\
      Q_{j} \arrow[']{dr}{\rho_j} &\dots &Q_{j'} \arrow{dl}{\rho_{j'}}\\
      &B
    \end{tikzcd}
  \end{center}
  
  Consider \(\{(\rho_j, Q_j) \for j \in J\}\) a parametrization of
  \(B\) obtained by Corollary \ref{cor:best-parametrization} and fix \(j \in J\). If
  \(\dim(Q_j) < \dim(B)\) then we let \(U_j = \real^{m+k}\) and \(C_j
  = Q_j\). Notice in particular that \(\Pi_m(\rho_j(Q_j \cap U_j) ) =
  \Pi_m(\rho_j(C_j))\) and that \(\dim(C_j) < \dim(B)\). Now, assume
  that \(\dim(Q_j) = \dim(B)\). Then, let \(\eta \colon Q_j \to
  \real^m\) defined by \(\eta = \Pi_m \circ \rho_j\). By assumption,
  this map has constant rank \(l\) and we must have
  \[l = \dim(\eta(Q_j)) \leq \dim(\Pi_m(B)) < \dim(B) = \dim(Q_j).\]
  By the Fiber Cutting Lemma above, there are \(U_j \subset
  \real^{m+k}\) a neighborhood of \(0\) and \(C_j \subset Q_j\) a
  simple sub-\(\Lambda\)-set such
  that \(\eta(Q_j \cap U_j) = \eta(C_j)\) and \(\dim(C_j) <
  \dim(B)\).

  Now, consider \(J_0 \subset J\) a finite subset such that
  \(V \coloneq \bigcup_{j \in J_0} \rho_j(Q_j \cap U_j)\) is a
  neighborhood of \(0\) in \(B\). We then have
  \[\Pi_m(V) = \bigcup_{j \in J_0} \Pi_m(\rho_j(Q_j \cap U_j)) =
    \bigcup_{j \in J_0} \Pi_m(\rho_j(C_j)).\]
  For \(j \in J_0\), \(C_j\) is a simple sub-\(\Lambda\)-set so that
  \(\rho_j(C_j)\) is also a simple sub-\(\Lambda\)-set. Thus, there is
  a \(\Lambda\)-set \(A_j \subset \real^{m+k+h_j}\) such that
  \(\Pi_{m+k}(A_j) = \rho_j(C_j)\) and \(\Pi_{m+k} \restriction A_j\) has
  finite fibers. We deduce that \(\dim(A_j) = \dim(C_j)\) by Lemma
  \ref{dimension-fibers}. In
  particular, \(\dim(A_j) < \dim(B)\) for \(j \in J_0\) and
  \[\Pi_m(V) = \bigcup_{j \in J_0} \Pi_m(A_j)\]
  whence the result. 
\end{proof}

Let \(A \subset \real^{m+k}\) be a \(\Lambda\)-set. Since \(A\) is
locally a finite union of \(\mathcal{A}\)-basic set around each point
of \(\clos A\), we can use the proposition above and the compactness
of \(\clos A\) to obtain the following corollary. 

\begin{cor}[Global Fiber Cutting]
  \label{global-fiber-cutting}
  Let \(B \subset \real^{m+k}\) be a \(\Lambda\)-set such that
  \(\dim(\Pi_m(B)) < \dim(B)\). Then, there are \(\Lambda\)-sets
  \(A_1, \dots, A_p\) such that
  \begin{itemize}
  \item For every \(1 \leq i \leq p\), there is some integer \(h_i
    \geq 0\) such that \(A_i \subset \real^{m+k+h_i}\), \(\dim(A_i)
    < \dim(B)\), \(\Pi_{m+k}(A_i) \subset B\) and \(\Pi_{m+k}
    \restriction A_i\) has finite fibers.
  \item We have
    \[\Pi_m(B) = \Pi_m(A_1) \cup \dots \cup \Pi_m(A_p).\]
  \end{itemize}
  \qed
\end{cor}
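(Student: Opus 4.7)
The plan is to reduce the global statement to finitely many applications of the previous proposition by covering $\clos B$ with neighborhoods on which $B$ locally decomposes into $\mathcal{A}$-basic sets, exactly as hinted in the paragraph preceding the statement. The key input is that $B$ is $\mathcal{A}$-semianalytic: for every $a \in \clos B$ and every sign vector $\sigma \in \{-1,1\}^{m+k}$, there is a polyradius $r_{a,\sigma}$ such that $h_{a,\sigma}(B) \cap \genpolydisk{m+k}{r_{a,\sigma}}$ is a finite union of $\mathcal{A}$-basic sets $C_{a,\sigma,1}, \dots, C_{a,\sigma, N_{a,\sigma}}$, each contained in a fixed polydisk and hence bounded.

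For each basic piece $C = C_{a,\sigma,i}$ I would split into two cases. If $\dim(C) < \dim(B)$, then $h_{a,\sigma}^{-1}(C) \subset B$ is already a $\Lambda$-set of dimension strictly less than $\dim(B)$, and it can be used as an $A_i$ directly with $h_i = 0$, the projection $\Pi_{m+k}$ restricted to it being the identity and therefore having (singleton) finite fibers. If $\dim(C) = \dim(B)$, then since $h_{a,\sigma}^{-1}(C) \subset B$ and the affine reflection $h_{a,\sigma}$ preserves dimension,
\[
\dim(\Pi_m(C)) = \dim(\Pi_m(h_{a,\sigma}^{-1}(C))) \leq \dim(\Pi_m(B)) < \dim(B) = \dim(C),
\]
so the hypotheses of the previous proposition are satisfied. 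It then produces $\Lambda$-sets $A^{C}_1, \dots, A^{C}_{p_C}$ of dimension less than $\dim(B)$, with $\Pi_{m+k} \restriction A^C_j$ of finite fibers and $\Pi_{m+k}(A^C_j) \subset C$, together with a neighborhood $U_C$ of $0$ satisfying $\Pi_m(C \cap U_C) = \bigcup_j \Pi_m(A^C_j)$. Transporting everything back through the extension of $h_{a,\sigma}^{-1}$ by the identity on the extra $h_j$ coordinates preserves the $\Lambda$-set class, the dimension bound and the finite-fiber property, while turning $U_C$ into the piece of a neighborhood of $a$ in $\real^{m+k}$ lying in the sub-quadrant indexed by $\sigma$.

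Combining all such contributions as $\sigma$ and $i$ vary yields a neighborhood $V_a$ of $a$ and a finite family $A^{(a)}_1, \dots, A^{(a)}_{q_a}$ of $\Lambda$-sets with $\Pi_m(B \cap V_a) = \bigcup_j \Pi_m(A^{(a)}_j)$. Since $\clos B$ is compact, finitely many $V_{a_1}, \dots, V_{a_N}$ cover it, and pooling the associated families gives
\[
\Pi_m(B) = \bigcup_\ell \Pi_m(B \cap V_{a_\ell}) = \bigcup_{\ell, j} \Pi_m\bigl(A^{(a_\ell)}_j\bigr),
\]
so the union of the $A^{(a_\ell)}_j$ is the sought-after collection. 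The main obstacle I anticipate is bookkeeping rather than a new geometric idea: one has to check carefully that the affine reflections $h_{a,\sigma}$ (extended by the identity on auxiliary coordinates) preserve $\Lambda$-sethood together with the finite-fiber property, that the low-dimensional basic pieces can legitimately be bundled into the same conclusion with $h_i=0$, and that the $2^{m+k}$ sub-quadrant contributions attached to each point $a$ genuinely stitch together to cover a full neighborhood $V_a$ of $a$, so that the compactness reduction goes through cleanly.
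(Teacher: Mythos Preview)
Your proposal is correct and follows exactly the approach the paper indicates: the paper leaves the proof implicit (the corollary carries only a \qed), with the preceding sentence explaining that it follows from the local proposition together with compactness of \(\clos B\) and the fact that \(B\) is locally a finite union of \(\mathcal{A}\)-basic sets. Your write-up is a faithful and careful unpacking of precisely this reduction, including the case split on \(\dim(C)\) and the transport through the affine reflections \(h_{a,\sigma}\); the concerns you flag at the end are indeed only bookkeeping.
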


\begin{remark}
  \label{rem:fibers-global-fiber-cutting}
  We require that \(\Pi_{m+k} \restriction A_i\) has finite fibers and
  that \(\Pi_{m+k}(A_i) \subset B\) so that, for every \(y \in
  \Pi_m(A_i)\), we have \(\dim((A_i)_y) \leq \dim(B_y)\). Indeed,
  consider \(y \in \Pi_m(A_i)\). We have \(\Pi_{m+k}((A_i)_y) \subset
  B_y\) and \(\Pi_{m+k} \restriction (A_i)_y\) has finite fibers so
  that, by Lemma \ref{dimension-fibers}, \(\dim((A_i)_y) =
  \dim(\Pi_{m+k}((A_i)_y)) \leq \dim(B_y)\). 
\end{remark}

We now want to prove that every sub-\(\Lambda\)-set is simple. The
rough idea of the proof is to apply the corollary above
inductively. However, we still need to make a link between
satisfying \(\dim (\Pi_m(B)) = \dim(B)\) and having \(\Pi_m
\restriction B\) have finite fibers. This is the point of the
following lemma. 

\begin{lem}
  \label{lambda-sets-constant-rank}
  Let \(B \subset \real^{m+k}\) be a \(\Lambda\)-set. Then, there are
  \(\Lambda\)-sets \(A_1, \dots, A_p\) such that
  \begin{itemize}
  \item For every \(1 \leq i \leq p\), there is some integer \(h_i
    \geq 0\) such that \(A_i \subset \real^{m+k+h_i}\) and
    \(\dim((A_i)_y)\) does not depend on \(y \in \Pi_m(A_i)\). 
  \item For each \(1 \leq i \leq p\), \(\Pi_{m+k} \restriction A_i\)
    has finite fibers and 
    \[B = \Pi_{m+k}(A_1) \cup \dots \cup \Pi_{m+k}(A_p).\]
  \end{itemize}  
\end{lem}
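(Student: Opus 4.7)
The plan is to deduce the lemma almost immediately from the parametrization result of Corollary \ref{global-parametrization}, together with the dimension formula of Lemma \ref{dimension-fibers}.

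First I would apply Corollary \ref{global-parametrization} to the $\Lambda$-set $B$ to obtain simple sub-$\Lambda$-sets $M_1, \dots, M_p$, each of which is a connected manifold, such that $B = M_1 \cup \dots \cup M_p$ and, for each $1 \leq i \leq p$, the projection $\Pi_m \restriction M_i$ has constant rank $l_i$, while an appropriate coordinate projection $\Pi_\iota \restriction M_i$ is an immersion. Writing $d_i = \dim(M_i)$, the constant rank theorem guarantees that the non-empty fibers $(M_i)_y$ are $(d_i - l_i)$-dimensional manifolds, so $\dim((M_i)_y)$ is independent of $y \in \Pi_m(M_i)$. (Alternatively, this uniform fiber dimension can be read off from Lemma \ref{dimension-fibers} together with the facts $\dim(\Pi_m(M_i)) = l_i$ and $\dim(M_i) = d_i$.)

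Next, since each $M_i$ is a simple sub-$\Lambda$-set, its definition supplies an integer $h_i \geq 0$ and a $\Lambda$-set $A_i \subset \real^{m+k+h_i}$ such that $\Pi_{m+k}(A_i) = M_i$ and $\Pi_{m+k} \restriction A_i$ has finite fibers. These $A_i$ are the candidates for the conclusion of the lemma. The identities $\Pi_m(A_i) = \Pi_m(M_i)$ and
\[
B = M_1 \cup \dots \cup M_p = \Pi_{m+k}(A_1) \cup \dots \cup \Pi_{m+k}(A_p)
\]
are then automatic.

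It only remains to check that $\dim((A_i)_y)$ is constant for $y \in \Pi_m(A_i)$, which is the one step that needs a short argument. Fix $y \in \Pi_m(A_i) = \Pi_m(M_i)$. The fiber $(A_i)_y$ is a simple sub-$\Lambda$-set by Lemma \ref{simple-lem}, and the projection $\Pi_{m+k}$ maps $(A_i)_y$ onto $(M_i)_y$ with finite fibers (as the restriction of a map that already has finite fibers on all of $A_i$). Applying Lemma \ref{dimension-fibers} to $(A_i)_y$, with the constant fiber dimension equal to $0$, gives $\dim((A_i)_y) = \dim((M_i)_y) = d_i - l_i$, which is independent of $y$. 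The only potential obstacle is ensuring that all the objects at hand fall within the hypothesis of Lemma \ref{dimension-fibers} (that is, that we are dealing with simple sub-$\Lambda$-sets rather than merely sub-$\Lambda$-sets), but this follows from Lemma \ref{simple-lem} applied to the fibers of a $\Lambda$-set, so no further work is required.
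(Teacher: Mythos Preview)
Your proposal is correct and follows essentially the same route as the paper: apply Corollary \ref{global-parametrization} to decompose \(B\) into simple sub-\(\Lambda\)-manifolds \(M_i\) with \(\Pi_m\) of constant rank, lift each \(M_i\) to a \(\Lambda\)-set \(A_i\) via the definition of simple sub-\(\Lambda\)-set, and then use Lemma \ref{dimension-fibers} on the fibers \((A_i)_y\) (with \(\mu=0\)) to transfer the constant fiber dimension from \(M_i\) to \(A_i\). Your explicit invocation of Lemma \ref{simple-lem} to justify that \((A_i)_y\) is a simple sub-\(\Lambda\)-set is a detail the paper leaves implicit.
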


\begin{proof}
  By Corollary \ref{global-parametrization}, there are \(M_1, \dots,
  M_p\) some simple sub-\(\Lambda\)-sets that are also manifolds such
  that \(A = M_1 \cup \dots \cup M_p\) and \(\Pi_m \restriction M_i\)
  has constant rank for each \(1 \leq i \leq p\). Now, consider a
  \(\Lambda\)-set \(B_i \subset \real^{m+k+h_i}\) such that
  \(\Pi_{m+k}(B_i) = M_i\) and \(\Pi_{m+k} \restriction B_i\) has
  finite fibers. We have that \(A = \Pi_{m+k}(B_1) \cup \dots \cup
  \Pi_{m+k}(B_p)\). Furthermore, if \(1 \leq i \leq p\) and \(y \in
  \Pi_m(B_i)\), then \(y \in \Pi_m(M_i)\) and \((M_i)_y =
  \Pi_{m+k}((B_i)_y)\). Since \(\Pi_{m+k} \restriction (B_i)_y\) has
  finite fibers, it follows by Lemma \ref{dimension-fibers} that
  \(\dim((B_i)_y) = \dim((M_i)_y)\) whence the result. 
\end{proof}

\begin{remark}
  \label{rem:strengthen-global-fiber-cutting}
  Reusing the notations of the lemma, since \(\Pi_{m+k}(A_i) \subset
  B\) and \(\Pi_{m+k} \restriction A_i\) has finite fibers, we can
  apply Lemma \ref{dimension-fibers} to find that \(\dim(A_i) =
  \dim(\Pi_{m+k}(A_i)) \leq \dim(B)\).
  
  This allows us to refine the statement of Corollary
  \ref{global-fiber-cutting}. Indeed, we may assume that the
  \(\Lambda\)-sets \(A_1, \dots, A_p\) in the conclusion of the
  corollary satisfy that \(\dim((A_i)_y)\) does not depend on \(y \in
  \Pi_m(A_i)\) for each \(1 \leq i \leq p\). This is especially
  interesting in view of Lemma \ref{dimension-fibers}. 
\end{remark}

\begin{thm}
  \label{sub-sets-are-simple}
  Let \(A \subset \real^m\) be a sub-\(\Lambda\)-set. Then, \(A\) is
  also a simple sub-\(\Lambda\)-set. 
\end{thm}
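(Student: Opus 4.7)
The plan is to proceed by induction on $\dim A'$, where $A' \subset \real^{m+k}$ is any $\Lambda$-set with $\Pi_m(A') = A$.

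For the base case $\dim A' = 0$, Corollary \ref{global-parametrization} expresses $A'$ as a finite union of connected $0$-dimensional manifolds, each of which is a single point; hence $A'$ is finite, so $A = \Pi_m(A')$ is also finite and trivially a simple sub-$\Lambda$-set. Combined with Lemma \ref{dimension-fibers}, this observation also yields that every $0$-dimensional simple sub-$\Lambda$-set is finite (since such a set is $\Pi_m(S')$ for a $\Lambda$-set $S'$ with finite projection fibers, so $\dim S' = 0$ and $S'$ is finite by the base case). This strengthened finiteness fact will be needed below.

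For the inductive step, suppose $\dim A' = d > 0$ and the result holds for all $\Lambda$-sets of dimension less than $d$. I first apply Lemma \ref{lambda-sets-constant-rank} to write $A' = \bigcup_i \Pi_{m+k}(B_i)$ for $\Lambda$-sets $B_i \subset \real^{m+k+h_i}$ with $\Pi_{m+k} \restriction B_i$ having finite fibers and $\dim((B_i)_y) = \mu_i$ constant on $y \in \Pi_m(B_i)$. Since $A = \bigcup_i \Pi_m(B_i)$ and Lemma \ref{simple-lem} gives closure of simple sub-$\Lambda$-sets under finite unions, it suffices to show each $\Pi_m(B_i)$ is simple. Applying Lemma \ref{dimension-fibers} along $\Pi_{m+k}$ yields $\dim B_i \leq d$; when $\dim B_i < d$, the inductive hypothesis applies to $B_i$ directly. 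Otherwise $\dim B_i = d$, and Lemma \ref{dimension-fibers} along $\Pi_m$ gives $d = \dim \Pi_m(B_i) + \mu_i$, which splits into two sub-cases.

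If $\mu_i > 0$, then $\dim \Pi_m(B_i) < d = \dim B_i$, so the Global Fiber Cutting Lemma (Corollary \ref{global-fiber-cutting}) applied to $B_i$ produces $\Lambda$-sets of dimension strictly less than $d$ whose $\Pi_m$-images cover $\Pi_m(B_i)$; by the inductive hypothesis each such image is simple, and hence so is $\Pi_m(B_i)$. If $\mu_i = 0$, every fiber $(B_i)_y$ is a $0$-dimensional simple sub-$\Lambda$-set by Lemma \ref{simple-lem}, hence finite by the base-case observation; thus $\Pi_m \restriction B_i$ has finite fibers, and $B_i$ itself witnesses $\Pi_m(B_i)$ as a simple sub-$\Lambda$-set. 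The main obstacle is precisely this $\mu_i = 0$ sub-case, where converting ``fibers of dimension $0$'' into genuinely finite fibers is delicate and rests on the base case, and ultimately on Corollary \ref{global-parametrization}.
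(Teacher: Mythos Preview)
Your proof is correct and uses the same key ingredients as the paper (Lemma \ref{lambda-sets-constant-rank}, Lemma \ref{dimension-fibers}, Corollary \ref{global-fiber-cutting}), but organizes them around a different and somewhat cleaner induction. The paper argues by choosing a representation \(A = \bigcup_i \Pi_m(B_i)\) that minimizes the lexicographic pair \((\mu, d)\), where \(\mu = \max_i \mu_i\) and \(d = \max\{d_i \for \mu_i = \mu\}\), and then derives a contradiction from \(\mu > 0\) via fiber cutting; this requires the control in Remark \ref{rem:fibers-global-fiber-cutting} to ensure that the new pieces \(C_j\) satisfy \(\mu_j \leq \mu\). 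You instead induct on the single quantity \(\dim A'\), which makes the descent automatic once fiber cutting yields \(\dim C_j < d\), and bypasses Remark \ref{rem:fibers-global-fiber-cutting} entirely. You also spell out explicitly the passage from ``fibers of dimension \(0\)'' to ``finite fibers'' via Corollary \ref{global-parametrization} and Lemma \ref{dimension-fibers}, a step the paper compresses into the sentence ``we only need to show that \(\mu = 0\)''. The paper's two-component invariant has the virtue of isolating fiber dimension as the primary obstruction, but your argument is more economical.
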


\begin{proof}
  The following diagram illustrates the proof below. 
  \begin{center}
    \begin{tikzcd}[row sep = large]
      &C_1 \arrow[']{dr}{\Pi_{m+k}} &\dots &C_q \arrow{dl}{\Pi_{m+k}} \\
      B_1 \arrow[']{drr}{\Pi_m} &\dots &B_i \arrow{d}{\Pi_m} &\dots
      &B_p \arrow{dll}{\Pi_m} \\
      &&A
    \end{tikzcd}
  \end{center}
  
  It suffices to show that there are \(\Lambda\)-sets \(B_1, \dots,
  B_p\) such that \(\Pi_m \restriction B_i\) has finite fibers and \(A
  = \Pi_m(B_1) \cup \dots \cup \Pi_m(B_p)\). Thus, consider \(B_1,
  \dots, B_p\) some \(\Lambda\)-sets such that \(A = \Pi_m(B_1) \cup
  \dots \cup \Pi_m(B_p)\). By applying Lemma
  \ref{lambda-sets-constant-rank}, we can assume
  that, for every \(1 \leq i \leq p\), the dimension \(\mu_i \coloneq
  \dim((B_i)_y)\) does not depend on \(y \in \Pi_m(B_i)\). Define also
  \(d_i = \dim(\Pi_m(B_i))\) for each \(1 \leq i \leq p\). Finally,
  consider \(\mu = \max(\mu_1, \dots, \mu_p)\) and \(d = \max\{d_i \for \mu_i
  = \mu\}\) and assume that \(B_1, \dots, B_p\) have been chosen in such
  a way that the pair \((\mu, d)\) is minimal for the lexicographic
  order. To prove the result, we only need to show that \(\mu =
  0\). Thus, assume by contradiction that \(\mu > 0\).

  Let \(1 \leq i \leq p\) such that \(\mu_i = \mu\) and \(d_i =
  d\). For the rest of the proof, we will write \(B = B_i\). By Lemma
  \ref{dimension-fibers}, we have \(\dim(B) = d + \mu > d\) so
  that, by Corollary \ref{global-fiber-cutting}, there are \(C_1,
  \dots, C_{q}\) some \(\Lambda\)-sets such that
  \[\Pi_m(B) = \Pi_m(C_1) \cup \dots \cup \Pi_m(C_{q})\]
  and \(\dim (C_j) < \dim(B)\) for \(1 \leq j \leq q\). Up to
  strengthenning Corollary \ref{global-fiber-cutting} as explained in
  Remark \ref{rem:strengthen-global-fiber-cutting}, we may
  assume that, for every \(1 \leq j
  \leq q\), the dimension \(\mu_j \coloneq \dim((C_j)_y)\) does not
  depend on \(y \in \Pi_m(C_j)\). Define
  also \(d_j = \dim(\Pi_m(C_j))\) for \(1 \leq j \leq q\). Then,
  fixing \(1 \leq j \leq q\),  we have \(\mu_j \leq \mu\) by Remark
  \ref{rem:fibers-global-fiber-cutting}. 
  If \(\mu_j = \mu\), then \(\dim(C_j) =
  \mu_j + d_j = \mu + d_j\) so that \(\mu + d > \mu+d_j\) whence
  \(d_j < d\). Thus, we contradict the minimality of the pair \((\mu,
  d)\) by
  replacing \(B\) with the family \(C_1, \dots, C_q\) and by repeating
  the same operation for each \(B_i\) such that \(\mu_i = \mu\) and
  \(d_i = d\). 
\end{proof}

\begin{remark}
  The proof can be seen as a procedure that takes a representation of
  the form \(A = \Pi_m(B_1) \cup \dots \cup \Pi_m(B_p)\) and that
  produces a new one such that the pair \((\mu, d)\) decreases. Since
  this procedure can be applied as long as \(\mu > 0\), we can apply
  it repeatedly until \(\mu = 0\). Also, the set of pairs \((\mu, d)\)
  is well ordered so that the process must stop eventually. 
\end{remark}

It is easy to see that \(\Lambda\)-sets have the \(\Lambda\)-Gabrielov
property using Theorem \ref{sub-sets-are-simple} and Corollary 
\ref{global-parametrization}. Following \cite[Theorem
4.18]{guenet-weakly-smooth}, we obtain:

\begin{thm}
  \label{thm:o-minimal}
  The structure \(\real_{\mathcal{A}}\) is o-minimal and
  model-complete. Furthermore, the sub-\(\Lambda\)-sets are exactly
  the bounded definable sets. 
\end{thm}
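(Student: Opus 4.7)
The plan is to follow the strategy of \cite[Theorem 4.18]{guenet-weakly-smooth}: apply Gabrielov's Theorem of the Complement (\cite[Theorem 2.7]{vdDries-Speissegger-genealized-power-series}) with $\Lambda$-sets taken as the basic sets. That theorem has two hypotheses, namely that every sub-$\Lambda$-set $A \subset \real^m$ admits a representation $A = \Pi_m(A')$ where $A' \subset \real^{m+k}$ is a $\Lambda$-set such that $\Pi_m \restriction A'$ has finite fibers, and that every $\Lambda$-set has finitely many connected components. The first is exactly Theorem \ref{sub-sets-are-simple}, while the second is part of Corollary \ref{global-parametrization}.

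From Gabrielov's theorem I would deduce that, inside any fixed polydisk, the complement of a sub-$\Lambda$-set is again a sub-$\Lambda$-set, so that the collection of sub-$\Lambda$-sets forms a boolean algebra which is also closed under projections and cartesian products (via Lemma \ref{simple-lem} and its $\Lambda$-set analogue). The next step is to identify sub-$\Lambda$-sets with bounded $\mathcal{L}_\mathcal{A}$-definable sets. One direction is essentially immediate: every sub-$\Lambda$-set is bounded and $\mathcal{L}_\mathcal{A}$-definable through its defining projection. The converse is shown by induction on the complexity of an $\mathcal{L}_\mathcal{A}$-formula $\phi(x)$: at the atomic level one uses Assumption \ref{graph-simple} together with the fact that Lemma \ref{simple-lem} renders the class of sub-$\Lambda$-sets stable under the reflections $h_{a, \sigma}$ appearing in the definition of $\mathcal{A}$-semianalyticity, noting that the total extensions $\widetilde f$ have sub-$\Lambda$ graphs since they vanish off of a polydisk; the boolean cases use closure under union, intersection and complement; and the existential case is absorbed into a projection.

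Once this characterization is in place, o-minimality and model-completeness follow at once. For o-minimality, every definable subset of $\real$ becomes bounded after applying the semialgebraic homeomorphism $\real \to (-1, 1)$ mentioned in Paragraph \ref{par:basic-defs}, hence is a sub-$\Lambda$-set and therefore has finitely many connected components by the remark following Corollary \ref{global-parametrization}. For model-completeness, every bounded definable set is by definition the projection of a $\Lambda$-set, which is itself quantifier-free $\mathcal{L}_\mathcal{A}$-definable as a finite union of $\mathcal{A}$-basic sets; hence every definable set is existentially definable.

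The main technical nuisance is the atomic case of the inductive characterization, since one must verify the pointwise, quadrant-by-quadrant $\mathcal{A}$-semianalyticity condition on all of $\real^m$ for expressions built out of the total extensions $\widetilde f$. Outside the polydisks on which $\widetilde f$ agrees with some $f \in \functionalg m n r$ this is trivial because $\widetilde f \equiv 0$; inside them, Assumption \ref{graph-simple} combined with the bookkeeping of Lemma \ref{simple-lem} handles everything. Apart from that, every remaining step is a direct application of results already proved in the present paper, which is why we refer the reader to \cite[Theorem 4.18]{guenet-weakly-smooth} for the complete argument.
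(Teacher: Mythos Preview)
Your proposal is correct and follows essentially the same route as the paper: the paper's proof is nothing more than the observation that Theorem~\ref{sub-sets-are-simple} together with Corollary~\ref{global-parametrization} verify the $\Lambda$-Gabrielov property, after which one invokes \cite[Theorem~4.18]{guenet-weakly-smooth}, exactly as you do. One small imprecision worth tightening: a $\Lambda$-set is not literally a finite union of $\mathcal{A}$-basic sets but only locally so after the affine maps $h_{a,\sigma}$ and a compactness argument---the quantifier-free definability you need for model-completeness still follows, but the phrasing as written overstates things.
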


\section{A nowhere smooth function definable in an o-minimal
  structure}
\label{sec:example}

In this final section, we are going to give an application of the
results proven above by constructing an o-minimal structure in which
we can define a nowhere smooth function. To do so, we borrow
techniques from \cite{legal-generic} and
\cite{legal-rolin-not-c-infty}. In Paragraph \ref{par:complete-tvs},
we construct a complete topological vector space of functions. Then,
in Paragraph \ref{par:strongly-transcendental}
we use Baire's Category Theorem to show that the set of strongly
transcendental functions (see Definition \ref{defn:strongly-transcendental})
is dense in the topological vector space. It turns out that such
strongly transcendental functions are nowhere smooth in the interval
\((-1, 1)\). Finally, in Paragraph
\ref{par:strongly-transcendental-omin}, we show that strongly
transcendental functions generate o-minimal structures. 

\subsection{A complete topological vector space}
\label{par:complete-tvs}

Throughout this section, we fix \((a_i)_{i \in \nat}\) a sequence of
points in the interval \((-1, 1)\) and we assume that the set \(S =
\{a_i \for i \in \nat\}\) is dense in \((-1, 1)\). We will be
considering the following vector space of functions. 

\begin{defn}
  We let \(\almostsmooth\) be the vector space of continuous functions \(f \colon
  (-1, 1) \to \real\) such that:
  \begin{itemize}
  \item The germ of \(f\) at any point \(x \in (-1, 1) \setminus S\) is
    weakly smooth.
  \item For \(p \in \nat\), the germ of \(f\) at \(a_p\) is \(C^p\). 
  \item For every \(a \in S\) and every \(p \geq 0\), the two limits
    \(\lim\limits_{x \to a^-} f^{(p)}(x)\) and \(\lim\limits_{x \to
        a^+} f^{(p)}(x)\) exist. 
  \end{itemize}
\end{defn}

\begin{remark}
  Reusing the notation of the definition, recall that, by definition
  of weakly smooth germs, the derivative
  \(f^{(p)}(x)\) exists for every \(x \in (-1, 1) \setminus S\) and
  every \(p \in \nat\). Thus, \(f^{(p)}(x)\) is well-defined for every
  \(x \in (-1, 1) \setminus \{a_0, \dots, a_{p-1}\}\) which is a dense
  open set in \((-1, 1)\) so that the third point in the definition
  above makes sense. 
\end{remark}

\begin{defn}
  Let \(f \in \almostsmooth\) and \(p \in \nat\). Then, we define the
  map
  \[(-1, 1) \ni x \mapsto f^{(p)}(x^-) \hspace{10pt}\text{(resp. }
    (-1, 1) \ni x \mapsto  f^{(p)}(x^+))\]
  to be the extension of the map
  \((-1, 1) \setminus \{a_0, \dots, a_{p-1}\} \ni x \mapsto
  f^{(p)}(x)\) by continuity on
  the left. (resp. continuity on the right).
\end{defn}

We are now going to define a topology on \(\almostsmooth\) by using a
family of seminorms. 

\begin{prp}
  Let \(K \subset (-1, 1)\) be a compact set and \(p \geq 0\) be an
  integer. We define a seminorm on \(\almostsmooth\) by letting
  \[\norm f {K, p} = \sup\left\{\abs{f^{(q)}(x)} \for x \in K
      \setminus S, q \leq p\right\},\]
  where the supremum is taken to be \(0\) whenever \(K \subset S\). 
  Furthermore, the family of all of these seminorms is separating.
\end{prp}

\begin{proof}
  We must show that \(\norm{\ }{K, p}\) is always finite. To do so,
  fix \(f \in \almostsmooth\) and assume that \(\norm f {K, p} = \infty\). There
  must be \(q \leq p\) and a sequence \((x_n)_{n \in \nat}\) of points
  of \(K \setminus S\) such that \(\abs{f^{(q)}(x_n)} \to
  \infty\). Up to extracting a subsequence, we may assume that there
  exists \(x \in K\) with \(x_n \to x\) and that, either \(x_n \leq x\)
  for all \(n \in \nat\) or \(x_n \geq x\) for all \(n \in
  \nat\). Since both cases are handled similarly, we assume that \(x_n
  \leq x\) for all \(n \in \nat\). Since the map \(K \ni y \mapsto
  f^{(q)}(y^-)\) is continuous on the left, we now have \(f^{(q)}(x_n)
  \to f^{(q)}(x^-)\) which is contradictory so that \(\norm{\ }{K,
    p}\) is always finite. It is now easy to see that this is a
  seminorm. Furthermore, since \((-1, 1) \setminus S\) is dense in
  \((-1, 1)\) and all functions in \(\almostsmooth\) are continuous,
  it is clear that the family of seminorms we have defined is separating. 
\end{proof}

\begin{defn}
  We make \(\almostsmooth\) into a locally convex topological vector space by
  endowing it with the topology generated by the family of seminorms
  \(\norm{\ }{K, p}\) for \(K \subset (-1, 1)\) a compact set and \(p
  \in \nat\). 
\end{defn}

Notice that the topology on \(\almostsmooth\) is metrisable. We would
now like to prove that \(\almostsmooth\) is a Baire space. To do so,
we use the Baire Category Theorem and the following proposition. 

\begin{prp}
  The topological vector space \(\almostsmooth\) is complete. 
\end{prp}

\begin{proof}
  Since \(\almostsmooth\) is metrisable, it suffices to show that any Cauchy
  sequence \((f_n)_{n \in \nat}\) in \(\almostsmooth\) converges to some element \(f \in
  \almostsmooth\). To do so, fix an integer \(p \geq 0\) and let \(g_{p, n} \colon
  (-1, 1) \to \real\) be the function defined by \(g_{p, n}(x) =
  f_n^{(p)}(x^-)\). For \(\epsilon > 0\), we let \(K_\epsilon =
  [-1+\epsilon, 1-\epsilon]\). For \(n, m \in \nat\) and \(x \in
  K_\epsilon \setminus S\), we have
  \[\abs{g_{p,n}(x) - g_{p, m}(x)} = \abs{f_n^{(p)}(x) - f_m^{(p)}(x)}
    \leq \norm f{K_\epsilon, p}.\]
  Furthermore, each \(g_{p, n}\) is left-continuous and \(S\) is
  countable. Thus, assuming that \(-1+\epsilon \not \in S\), we get
  that \(\abs{g_{p, n}(x) - g_{p, m}(x)} \leq \norm f {K_\epsilon,
    p}\) for all \(x \in K_\epsilon\). Since \(\bigcup_{\epsilon > 0}
  K_\epsilon = (-1, 1)\) it follows that the sequence \((g_{p, n})_{n
    \in \nat}\) converges to a function \(g_p \colon (-1, 1) \to
  \real\) uniformly on all compacts \(K \subset (-1, 1)\). Similarly,
  letting \(h_{p, n}
  \colon (-1, 1) \to \real\) be the function defined by \(h_{p, n}(x)
  = f_n^{(p)}(x^+)\) for all \(x \in (-1, 1)\), we find that the
  sequence \((h_{p, n})\) converges to a function \(h_p \colon (-1, 1)
  \to \real\) uniformly on all compacts \(K \subset (-1, 1)\).

  For \(n \in \nat\), the function \(f_n\) is continuous so that
  \(g_{0, n} = h_{0, n}\). Thus, \(g_0 = h_0\) and we let \(f = g_0 =
  h_0\). We are going to show that \(f \in \almostsmooth\) and that \(f_n \to
  f\). To begin with, \(g_p\) is left-continuous and \(h_p\) is
  right-continuous for all \(p \in \nat\). Furthermore, let \(b_0,
  \dots, b_{p-1}\) be a reordering of the elements \(a_0, \dots,
  a_{p-1}\) such that \(b_0 < \dots < b_{p-1}\). Define also \(b_{-1}
  = -1\) and \(b_p = 1\). For each \(0 \leq i \leq p\), each \(f_n\)
  is \(C^p\) on the interval \(I_i\) by assumption. Thus, \(f
  \restriction I_i\) is also \(C^p\) on \(I_i\) and we have
  \[g_p \restriction I = (f \restriction I)^{(p)} = h_p \restriction
    I.\]
  Thus, we conclude at once that \(f \in \almostsmooth\) and that
  \(g_p(x) = f^{(p)}(x^-)\) and \(h_p(x) = f^{(p)}(x^+)\) for all \(p
  \in \nat\) and all \(x \in (-1, 1)\). It is thus clear that \(f\) is
  the limit of the sequence \((f_n)\) which concludes. 
\end{proof}

By applying the Baire Category Theorem, we now get the following
corollary. 

\begin{cor}
  The topological vector space \(\almostsmooth\) is a Baire
  space. \qed
\end{cor}

\subsection{Strongly transcendental functions in \(\almostsmooth\)}
\label{par:strongly-transcendental}

Recall that a subset \(A \subset \almostsmooth\) is called \emph{residual}
whenever it is a countable intersection of dense open sets and that
such residual sets are dense.

\begin{defn}
  Let \(f \in \almostsmooth\) be a function, \(n, p, k \in \nat\) be
  integers such that \(p \geq k\) and \(x = (x_1, \dots, x_n)\) be a
  tuple of points in \((-1, 1)
  \setminus \{a_0, \dots, a_{p-1}\}\). We then define \(j_n^{p,
    k}f(x)\) to be the tuple containing the following elements (with
  multiplicity):
  \begin{itemize}
  \item \(f^{(q)}(x_i)\) for \(0 \leq q \leq p\) and \(1 \leq i \leq
    n\).
  \item \(f^{(q)}(a_i)\) for \(0 \leq q \leq i\) and \(0 \leq i \leq
    k\).
  \item \(f^{(q)}(a_i^-), f^{(q)}(a_i^+)\) for \(i < q \leq p\) and
    \(0 \leq i \leq k\). 
  \end{itemize}
\end{defn}

\begin{defn}
  \label{defn:strongly-transcendental}
  Given \(f \in \almostsmooth\), we say that \(f\) is \emph{strongly
    transcendental} when, for all integers \(n, k \in \nat\) and
  all tuples \(x = (x_1, \dots, x_n)\) of points of \((-1, 1)
  \setminus S\) such that \(x_i \neq x_j\) whenever \(i \neq j\),
  there exists \(C \in \nat\) such that
  \[\trdeg(x, a_0, \dots, a_k, j_n^{p, k}f(x)) \geq n(p+2) +
    (k+1)(2p+3) - \frac{(k+1)(k+2)}{2} - C\]
  for all \(p \in \nat\) with \(p \geq k\). 
\end{defn}

\begin{remark}
  Let \(f \in \almostsmooth\) be strongly transcendental. Then, for each \(a \in
  S\), the germ of \(f\) at \(a\) is not weakly smooth. Since \(S\) is
  dense in \((-1, 1)\), it follows that \(f\) is nowhere smooth. 
\end{remark}

The rest of this paragraph is dedicated to proving the following
theorem. 

\begin{thm}
  \label{thm:strongly-transcendental-residual}
  The set \(A \subset \almostsmooth\) of strongly transcendental
  functions is dense in \(\almostsmooth\). 
\end{thm}

\begin{remark}
  Reusing the notation of Definition
  \ref{defn:strongly-transcendental}, we will actually show a
  stronger result according to which the constant \(C\) can
  generically (meaning in a residual set) be taken as \(C =
  n+k+1\). 
\end{remark}

\begin{lem}
  Fix \(n, p, k \in \nat\) some integers such that \(p \geq k\) and
  let \(X\) be a \(\rational\)-algebraic set of codimension \(n+1\) in
  \(\real^{l}\) with
  \[l = n(p+2) + 2(k+1)(p+1) - \frac{(k+1)(k+2)}{2}.\]
  If \(D = (-1, 1) \setminus \{a_1, \dots, a_{p}\}\) and
  \(\Delta_n = \{x \in D^n \for \exists i \neq j,
  x_i = x_j\}\) then the set
  \[E_n^{p, k}(X) = \{f \in \almostsmooth \for \forall x \in D^n
    \setminus \Delta_n, (x, j_n^{p,k}f(x)) \not \in X\}\]
  is residual. 
\end{lem}

\begin{proof}
  To begin with, we are going
  to prove that \(E_n^{p, k}(X)\) is dense in \(\almostsmooth\). To do
  so, let \(f \in \almostsmooth\)
  and consider \(b_0, \dots, b_k\) the permutation of \(a_0, \dots,
  a_k\) such that \(b_0 < \dots < b_k\). Define also \(b_{-1} = -1\)
  and \(b_{k+1} = 1\). Let \(\epsilon_{ij} \in \real\) for \(0 \leq i
  \leq k+1\) and \(0 \leq j \leq (n+2)(p+1)-1\). We then define the
  function \(h_\epsilon \colon (-1, 1) \to \real\) by letting
  \begin{equation*}
    h_\epsilon(x) = \epsilon_{i, 0} + \dots + \epsilon_{i,
      (n+2)(p+1)-1} x^{(n+2)(p+1)-1}
  \end{equation*}
  for \(x \in [b_{i-1}, b_i)\). The vector subspace
  \[W = \{\epsilon \in \real^{(k+2)(n+2)(p+1)} \for h_\epsilon \in
    \almostsmooth\}\]
  has codimension \(\frac{(k+1)(k+2)}{2}\).
  We can then define a map
  \begin{alignat*}{3}
    \Phi \colon D^n \setminus \Delta_n \times W &\to \real^{l} \\ 
    (x, \epsilon) &\mapsto (x, j_n^{p, k}(f+h_\epsilon)(x)).
  \end{alignat*}
  We are now going to prove that this map is \(C^1\) and that it is a
  submersion. \\

  To do so, let \(x \in D^n \setminus \Delta^n\), \(\epsilon, \delta \in
  W\), \(0 \leq q \leq p\), \(1 \leq i \leq n\) and \(y \in D^n
  \setminus \Delta^n\) such
  that the segment from \(x\) to \(x+y\) is fully included in
  \(D^n \setminus \Delta_n\). We have
  \begin{align*}
    (f + h_{\epsilon + \delta})^{(q)}(x_i + y_i)
    &= f^{(q)}(x_i+y_i) + h_{\epsilon}^{(q)}(x_i + y_i) +
      h_{\delta}^{(q)}(x_i+y_i) \\
    &= f^{(q)}(x_i) + y_if^{(q+1)}(x_i) + o(y_i) +
      h_{\epsilon}^{(q)}(x_i)\\
    &\hspace{10pt} + y_ih_{\epsilon}^{(q+1)}(x_i) + o(y_i)
      + h_\delta^{(q)}(x_i) + o(\delta).
  \end{align*}
  Thus, the map \((x, \epsilon) \mapsto (f+h_{\epsilon})^{(q)}(x_i)\)
  is differentiable and if we let \(\grad^0_{i,q}\) be its gradient, then
  we find
  \[\langle \grad^0_{i, q}(x, \epsilon) \scalar (y, \delta) \rangle =
    y_i[f^{(q+1)}(x) + h_{\epsilon}^{(q+1)}(x)] + h_\delta^{(q)}(x_i).\]
  Notice that \(\grad^0_{i, q}\) is continuous.

  Furthermore, if \(x, y, \epsilon, \delta, q\) are as above, \(0 \leq
  i \leq k\) and \(s \in \{+, -\}\) is a sign, then we can show as
  above that the map \((x, \epsilon) \mapsto
  (f+h_\epsilon)^{(q)}(a_i^s)\) is differentiable. Furthermore, its
  gradient \(\grad^{1, s}_{i, q}\) is such that
  \[\langle \grad^{1, s}_{i, q}(x, \epsilon) \scalar (y, \delta) \rangle =
    h_\delta^{(q)}(a_i^s).\]
  Notice that \(\grad^{1, s}_{i, q}\) does not depend on the sign
  \(s\) whenever \(q \leq i\) and in this case, we let \(\grad^1_{i,
    q}\) be their common value. 

  In order to conclude the proof that the map \(\Phi\) is a
  submersion, we must prove that the following family of gradients is
  linearly independent.
  \begin{itemize}
  \item \(\grad^0_{i, q}\) for \(1 \leq i \leq n\) and \(0 \leq q \leq
    p\). 
  \item \(\grad^1_{i, q}\) for \(0 \leq i \leq k\) and \(0 \leq q \leq
    i\).
  \item \(\grad^{1, s}_{i, q}\) for \(0 \leq i \leq k\), \(i <  q \leq
    p\) and \(s \in \{+, -\}\). 
  \item \(\grad(x_i)\) for \(1 \leq i \leq n\). 
  \end{itemize}
  But it is easy to see that the common zero set of these linear forms
  is reduced to \(0\), whence the result. \\

  Finally, since \(X\) is an algebraic set of codimension at most
  \(n+1\), it is a union of finitely many smooth manifolds of
  codimension at least \(n+1\). Thus, \(\Phi^{-1}(X)\) is the union of
  finitely many \(C^1\)-submanifolds of codimension at least
  \(n+1\). Thus, by Sard's Theorem, \(\pi(\Phi^{-1}(X))\) is the union
  of finitely many sets of measure \(0\), and hence has measure \(0\),
  where \(\pi \colon D^n \setminus \Delta_n \times W \to W\) is the
  projection \(\pi(x, \epsilon) = \epsilon\). This proves that there
  are \(\epsilon \in W\) as small as we wish such that \(f +
  h_\epsilon \in E_n^{p, k}(X)\) whence the density.

  The only part of the proof left is to show that \(E_n^{p, k}(X)\) is
  the intersection of countably many open sets. To do so, let
  \((K_m)_{m \geq 0}\)
  be an exhaustion of \(D^n \setminus \Delta_n\) by compact
  sets. Then, define
  \[E_{n, m}^{p, k}(X) = \{f \in \almostsmooth \for \forall x \in K_m,
    (x, j_n^{p, k}f(x)) \not \in X\}.\]
  We have \(E_n^{p, k}(X) = \bigcap_{m \geq 0} E_{n, m}^{p, k}(X)\) so
  that it suffices to prove that each \(E_{n, m}^{p, k}(X)\) is
  open. To do so, assume that \((f_\gamma)_{\gamma \geq 0}\) is a
  sequence in its complement which converges to \(f \in
  \almostsmooth\). For each \(\gamma \geq 0\), there is 
  \(x_\gamma \in K_m\) such that \((x_\gamma, j_n^{p,
    k}f_\gamma(x_\gamma)) \in X\). By compactness of \(K_m\), there
  exists \(x \in K_m\) such that \(x_\gamma \to x\). Then, it is clear
  that we have \((x_\gamma, j_n^{p, k}f_\gamma(x_\gamma)) \to (x,
  j_n^{p, k}f(x))\). Since \(X\) is closed, it follows that \((x,
  j_n^{p, k}f(x)) \in X\) whence \(f \in E_{n, m}^{p, k}\) which
  concludes. 
\end{proof}

\begin{proof}[Proof of Theorem \ref{thm:strongly-transcendental-residual}]
  It suffices to show that
  \[A \supset \bigcap_{n, p, k, X} E_n^{p, k}(X),\]
  where \(n, p, k\) range over \(\nat\) and \(X\) ranges over
  \(\rational\)-algebraic subsets of codimension \(n+1\) in
  \(\real^l\), with
  \[l = l(n, p, k) =  n(p+2) + 2(k+1)(p+1) - \frac{(k+1)(k+2)}{2}.\]
  To do so, let \(f \in \almostsmooth\) be an element contained in the
  right hand side and let \(n, k \in \nat\) and \(x = (x_1, \dots,
  x_n)\) be a tuple of points in \((-1,1) \setminus S\). Assume that
  \(x_i \neq x_j\) whenever \(i \neq j\). Then, given \(p \in \nat\)
  such that \(p \geq k\), let \(X\) be a
  \(\rational\)-algebraic subset of \(\real^l\) with least dimension
  such that \((x, j_n^{p, k} f(x)) \in X\). Then, \(X\) has
  codimension at most \(n\). Thus
  \[\trdeg(x, j_n^{p, k}f(x)) \geq l-n\]
  from which it follows that
  \[\trdeg(x, a_0, \dots, a_k, j_n^{p, k}f(x)) \geq n(p+2) +
    (k+1)(2p+3) - \frac{(k+1)(k+2)}{2} - n - k - 1.\]
  Since this holds for all \(p \in \nat\), we obtain that \(f\) is
  strongly transcendental at once. 
\end{proof}

\subsection{An o-minimal structure}
\label{par:strongly-transcendental-omin}

Throughout this paragraph, we let \(f \colon (-1, 1) \to \real\) be a
strongly transcendental function in \(\almostsmooth\). We are going to
show that the structure \(\langle \real_\text{alg}; f \rangle\) is
o-minimal. To do so, we begin by constructing a quasianalytic class of
functions. Firstly, recall the following

\begin{lem}[\cite{legal-rolin-not-c-infty}, Lemma 2.1]
  Let \(F \in \formser \real X\), where \(X\) is a single
  variable. There exists a function \(f \colon \real \to \real\) such
  that
  \begin{itemize}
  \item The germ of \(f\) at the origin is weakly smooth.
  \item The restriction of \(f\) to the complement of any neighborhood
    of \(0\) is given piecewise by finitely many polynomials. 
  \item We have \(\taylor(f) = F\).
  \end{itemize}
\end{lem}

For each \(x \in (-1, 1) \setminus S\), we let \(f_x\) be the germ at
the origin of the function \(y \mapsto f(x + y)\). Now, let \(a \in
S\) and let \(g_{a} \colon \real \to \real\) be a function as in the
previous lemma, with
\[\taylor(g_a) = \sum_{q \geq 0} \frac{f^{(q)(a^+)}}{q!} X^q.\]
We now let \(f_{a^+}\) be the germ at \(0\) such that
\begin{equation*}
  f_{a^+}(y) =
  \begin{cases}
    f(a+y) &\text{if } y \geq 0 \\
    g_a(y) &\text{otherwise.}
  \end{cases}
\end{equation*}
It is clear that the germ \(f_{a^+}\) is weakly smooth. Similarly, we
can define a germ \(f_{a^-}\) at the origin such that \(f_{a^-}(y) =
f(a+y)\) when \(y \leq 0\). 

\begin{remark}
  Notice that the germ \(f_{a^+}\) depends on the choice of
  \(g_{a}\). Similarly, the germ \(f_{a^-}\) will depend on the choice
  of a function \(h_a\). Also, each germ \(f_x\) for \(x \in (-1, 1)
  \setminus S\) and \(f_{a^+}, f_{a^-}\) for \(a \in S\) have canonical
  representatives. In what follows, we will identify the germs and
  their representatives.
\end{remark}

We then let \(\mathcal{A}\) be the class of all germs that can be
written as
\(\mathcal{L}(g_1, \dots, g_m)\) where \(\mathcal{L}\) is an operator
as defined in Section 2.2 of \cite{legal-rolin-not-c-infty} and \(g_1,
\dots, g_m\) are distinct members of \(\mathcal{G}\), where
\[\mathcal{G} = \{f_x \for x \in (-1, 1) \setminus S \cup \{a^+ \for a
  \in S\} \cup \{a^- \for a \in S\}.\]
Arguing as in Lemma 3.6 of \cite{legal-generic} and using that \(f\)
is strongly transcendental, we can show that the algebras
\(\mathcal{A}_n\) are quasianalytic for each \(n \geq
1\). Furthermore, these algebras obviously contain polynomials and are
stable under composition, monomial division and implicit functions.

In order to apply Theorem \ref{thm:o-minimal}, it suffices to check
that each germ
\(g  \in \mathcal{A}_n\) has a representative (which we still denote by
\(g\)) such that \(\graph g\) is a simple sub-\(\Lambda\)-set. We
start by considering the special case when \(g \in \mathcal{G}\) and
we let \(y_0\) be sufficiently close to \(0\). Then there exist \(h_1,
h_2 \in \mathcal{G}\) such that the following is an equality between
germs of sets at \(y_0\):
\[\graph g = \{(y, z) \for y \geq y_0, z = h_1(y - y_0)\} \cup \{(y,
  z) \for y \leq 0, z = h_2(y - y_0)\}.\]
Finally, the general case follows from the special one by the same
technique as in the proof of Lemma 2.5 in
\cite{guenet-weakly-smooth}. Thus, we can apply Theorem
\ref{thm:o-minimal} to obtain that

\begin{prp}
  The structure \(\real_\mathcal{A}\) is o-minimal and it has the same
  definable sets as \(\langle \real_\text{alg}; f\rangle\). 
\end{prp}

Finally, let \(\phi \colon \real \to (-1, 1)\) be the function defined
by
\[\phi(x) = \frac{x}{\sqrt{1 + x^2}}.\]
Since the function \(\phi\) is an \(\real_\mathcal{A}\)-definable
homeomorphism, it follows that the function \(f \circ \phi\) is a
nowhere smooth function which is \(\real_\text{A}\)-definable.

\end{document}